\newtheorem{theorem}{Theorem}
\newtheorem{lemma}[theorem]{Lemma}
\newtheorem{definition}[theorem]{Definition}
\newcommand{\prob}{\mathbb{P}}
\newcommand{\qed}{\hfill $\square$}
\newcommand{\xn}{\lfloor xn\rfloor}
\newcommand{\exn}{\lfloor \varepsilon \xn\rfloor}
\def\lra{\leftrightarrow}
\def \Z {\mathbb Z}
\def \N {\mathbb N}
\def \E {\mathbb E}
\def \P {\mathbb P}
\def\al{\alpha}
\def\be{\beta}
\def\ep{\varepsilon}
\def\de{\delta}
\def\La{\Lambda}
\def\ga{\gamma}
\def\de{{\delta}}
\begin{document}
\title{On the size of the largest cluster in $2D$ critical percolation}
\author{J. van den Berg\footnote{CWI and VU University Amsterdam}
\, and  R. Conijn\footnote{VU University Amsterdam} \\
{\footnotesize email: J.van.den.Berg@cwi.nl, R.P.Conijn@vu.nl}
}
\date{}

\maketitle

\begin{abstract}
We consider (near-)critical percolation on the square lattice.
Let $\mathcal{M}_{n}$ be the size of the largest open cluster contained in the box $[-n,n]^2$,
and let $\pi(n)$ be the probability that there is an open path from $O$ to the boundary of the box.
It is well-known (see \cite{BCKS01}) that for all $0< a < b$ the probability that $\mathcal{M}_{n}$ is smaller
than $a n^2 \pi(n)$ and the probability that $\mathcal{M}_{n}$ is larger than $b n^2 \pi(n)$ are
bounded away from $0$ as $n \rightarrow \infty$. 
It is a natural question, which arises for instance in the study of so-called frozen-percolation processes,
if a similar result holds for the probability that $\mathcal{M}_{n}$
is {\em between} $a n^2 \pi(n)$ and $b n^2 \pi(n)$. 
By a suitable partition of the box, and a careful construction involving the building blocks, we show that the
answer to this question is affirmative. 
The `sublinearity' of $1/\pi(n)$ appears to be essential for the argument.
\end{abstract}

\section{Introduction and main result}\label{sec:introduction}
Consider bond percolation on $\mathbb{Z}^{2}$ with parameter $p$.
(See \cite{G99} for a general introduction to percolation theory.) Let $\Lambda_{n} = [-n,n]^2 \cap \mathbb{Z}^2$
and let, for $v \in \Lambda_{n}$,  $\mathcal{C}_{n}(v)$ denote the size of the open cluster of $v$ inside the box $\Lambda_{n}$:
\begin{equation*}
 \mathcal{C}_{n}(v) = |\{ w \in \Lambda_{n}: v \lra w, \quad \textrm{inside}\; \Lambda_{n}\}|,
\end{equation*}
where we use the standard notation $v \lra w$ for the existence of an open path from $v$ to $w$, and
where the addition `inside $\La_n$' means that we require the existence of such a path which is located entirely in
$\La_n$. 
For a set $A \subset \mathbb{Z}^{2}$ we denote by $\partial A$ the (internal) boundary of $A$:
\begin{equation*}
 \partial A = \{ v \in A: \exists w \not\in A: (v,w)\textrm{ is an edge } \}.
\end{equation*}
The remaining part of $A$ will be called the interior of $A$.
Let $\pi_{p}(n)$ be the probability $\prob_{p}(O \lra \partial \Lambda_{n})$.
For simplicity we write $\pi(n)$ for $\pi_{\frac{1}{2}}(n)$.

We are interested in the size of `large' open clusters in $\La_n$ for the case where $p$ is equal (or close) to the
critical value $1/2$.
It is known in the literature that, informally speaking, the size of the largest open cluster is typically
of order $n^2 \pi(n)$: For any $c > 0$, there is a `reasonable' probability that it is larger (smaller)
than $c n^2 \pi(n)$, and this probability goes to $0$ uniformly in $n$ as $c \rightarrow \infty$
($c \rightarrow 0$). (See \cite{BCKS99}, \cite{BCKS01}; see also \cite{J03} Section 3.)
However, the question whether for all $0 < a < b$ there is a `reasonable' probability that there is an open
cluster with size {\em between} $a n^2 \pi(n)$ and
~$b n^2 \pi(n)$ has not been investigated in the literature.
 
This question, which is also natural by itself, arises e.g. in the study
of finite-parameter frozen-percolation models.
In these models each edge is closed at time $0$ and `tries' to become open
at some random time, independently of the other edges. However, an open cluster stops growing as soon as its
size has reached a certain
(large) value $M$, the parameter of the model. (See \cite{BLN12} where this was studied for the case where
the `size' of a cluster is defined as its diameter instead of its volume.)
The investigation of such processes leads to the question
how two open clusters which both have size of order $M$ but smaller than $M$, merge to a cluster of size bigger than $M$,
which in turn leads to the question at the end of the previous paragraph.
To state our main result, an affirmative answer to that question, we first need a few more definitions.

For $k,l \in \mathbb{N}$, we denote by $HC(k,l)$ the event that there is an open horizontal crossing in the box
$[0,k]\times [0,l]$. (This is an open path from the left side to the right side of the box, of which all vertices, except
the starting and end point, are in the interior of the box). Let the ``characteristic length'' be as defined in e.g.
\cite{N08} and \cite{K87}:
For a fixed  $\epsilon \in (0,\frac{1}{2})$:
\begin{equation}
 L(p) = L_{\epsilon}(p) = \left\{ \begin{array}{ll} \min{\{ n\in \mathbb{N}: \prob_{p}(HC(n,n)) \le \epsilon\} } & \textrm{if } p < \frac{1}{2},\\
                          \min{\{ n \in \mathbb{N}: \prob_{p}(HC(n,n)) > 1 - \epsilon\} } & \textrm{if } p >\frac{1}{2},
                         \end{array}\right.
\end{equation}
and $L(\frac{1}{2}) = \infty$.
The precise value of $\epsilon$ is not essential. Throughout this paper we will consider it as being fixed, and therefore
we omit it from our notation.

As said before, our main question concerns the existence of {\em some} open cluster in $\La_n$ with size in some
specific interval.
The proof we obtained gives, with only a tiny bit of extra work, something stronger;
it shows that 
with `reasonable' probability the {\em maximal} open cluster has this property. 
Therefore we state our main result in this
stronger form (and remark that we do not know an essentially simpler proof of the original weaker form): \\
Denote by $\mathcal{M}_{n}$ the size of the maximal open cluster in $\Lambda_{n}$. More precisely,
\begin{equation*}
\mathcal{M}_{n} = \max_{v \in \Lambda_{n}}{ \mathcal{C}_{n}(v) }.
\end{equation*}

\begin{theorem}\label{thm:maxClusterInInt}
 Let $0 < a < b$. There exist $\delta > 0$ and $N \in \mathbb{N}$ such that, for all $n \geq N$ and all $p$ with
$L(p) \geq n$,
\begin{equation*}
 \prob_{p}\left(\mathcal{M}_{n} \in (an^2 \pi(n), bn^2 \pi(n))\right) > \delta.
\end{equation*}
\end{theorem}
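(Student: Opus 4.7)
The plan is to combine the one-sided tail estimates for $\mathcal{M}_n$ provided by \cite{BCKS01} with a partition of $\Lambda_n$ into sub-boxes of side $m=\lfloor\epsilon n\rfloor$, for a small parameter $\epsilon>0$ to be tuned at the end. Quasi-multiplicativity of arm probabilities, together with the fact that the one-arm exponent in critical 2D bond percolation is strictly less than $2$, yields $m^2\pi(m)\le C\,\epsilon^{2-\alpha}\,n^2\pi(n)$ for some $\alpha<1$ independent of $\epsilon$. Thus a single sub-box contributes a cluster whose volume is negligible compared with the target window $(b-a)\,n^2\pi(n)$ as soon as $\epsilon$ is small. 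This is the quantitative form of the ``sublinearity of $1/\pi(n)$'' highlighted in the abstract, and is the reason to work with a partition in the first place.

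The driving idea is a two-sample interpolation. Take independent samples $\omega^-,\omega^+$ with law $\mathbb{P}_p$, and for any collection $S$ of sub-boxes form the hybrid $\omega_S$ using the edge values of $\omega^-$ inside $\bigcup_{B\in S}B$ and those of $\omega^+$ outside. Each $\omega_S$ has law $\mathbb{P}_p$. Enumerating the sub-boxes as $B_1,\ldots,B_K$ with $K\asymp\epsilon^{-2}$, set $S_j:=\{B_1,\ldots,B_j\}$ and $M_j:=\mathcal{M}_n(\omega_{S_j})$. By \cite{BCKS01} and independence of $\omega^\pm$, with positive probability one has jointly $M_0\ge b\,n^2\pi(n)$ and $M_K\le a\,n^2\pi(n)$. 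If, on this event, every increment $|M_{j+1}-M_j|$ is bounded by $(b-a)\,n^2\pi(n)$, then some intermediate $M_j$ must lie in the open interval $(a\,n^2\pi(n),b\,n^2\pi(n))$; since each $\omega_{S_j}$ has law $\mathbb{P}_p$, a union bound over $j$ then gives $\mathbb{P}_p(\mathcal{M}_n\in(a\,n^2\pi(n),b\,n^2\pi(n)))\ge \delta/K$ with $\delta$ the probability of the joint event, uniform for $p$ with $L(p)\ge n$.

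The main step, and the principal obstacle, is the uniform increment bound $|M_{j+1}-M_j|<(b-a)\,n^2\pi(n)$. Resampling the bonds inside a single sub-box $B=B_{j+1}$ only affects clusters that touch $B$: any cluster of the new configuration is a union of certain clusters of the old one that get joined by the freshly sampled bonds of $B$, plus the additional vertices contributed by $B$ itself. The dangerous scenarios are those in which $B$ fuses several pre-existing giant clusters into one, or conversely cleaves a single giant cluster into several giant pieces; either could cause a jump of order $n^2\pi(n)$. My plan to rule these out is to restrict to a positive-probability ``tame topology'' event on which, at every scale between $m$ and $n$, any sub-box boundary is incident to at most one cluster of size $\ge\delta\,n^2\pi(n)$; on this event a swap changes $\mathcal{M}_n$ only by the local volume contribution of $B$, namely $O(m^2\pi(m))=O(\epsilon^{2-\alpha}n^2\pi(n))$, and choosing $\epsilon$ small enough finishes the argument. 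Constructing this tame event simultaneously for all sub-boxes and uniformly in $p$ with $L(p)\ge n$, using RSW, gluing, and the description of the largest cluster in \cite{BCKS99,BCKS01}, is where the ``careful construction involving the building blocks'' of the abstract should do its real work.
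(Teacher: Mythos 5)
Your approach is genuinely different from the paper's. The paper does not interpolate between two configurations; it \emph{directly constructs} an open cluster of prescribed size by conditioning on open circuits in narrow annuli inside each sub-box, joined by open ``corridors,'' and then controls (i) the expected cluster mass contributed by the annuli/corridors/boundary region, and (ii) the independent contributions $\mathcal{C}^{\tilde\gamma_{i,j}}$ from the circuit interiors, which are then ``steered'' into the target interval by an elementary lemma on sums of independent random variables. The heavy lifting in the paper is a chain of FKG/RSW decoupling estimates making these conditional contributions approximately independent with one-sided moment control, and Theorem~\ref{thm:maxClustCanBeSmall} is used only at the very end to promote the existence of \emph{a} cluster in the window to a statement about $\mathcal{M}_n$.

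Your interpolation scheme has a real gap at exactly the step you flag as the ``principal obstacle.'' The proposed tame event --- ``any sub-box boundary is incident to at most one cluster of size $\ge\delta n^2\pi(n)$'' --- rules out the fusion scenario but not the cleavage scenario. If the single giant cluster touching $\partial B_{j+1}$ passes through $B_{j+1}$ and its restriction to $B_{j+1}$ is a cut set (the cluster's two macroscopic pieces outside $B_{j+1}$ communicate only through $B_{j+1}$), then resampling $B_{j+1}$ can split it into two macroscopic pieces and $\mathcal{M}_n$ can drop by $\Theta(n^2\pi(n))$, even though tameness held in $\omega_{S_j}$. To exclude this you would need a stronger, connectivity-type condition (no macroscopic pinch points of the giant cluster at scale $m$, in every hybrid $\omega_{S_0},\dots,\omega_{S_K}$ simultaneously), and it is not clear this event has probability bounded away from $0$ uniformly in $n$ and $p$; pivotal boxes for a macroscopic cluster occur with nonvanishing probability, and the hybrids $\omega_{S_j}$ are $K+1$ highly dependent configurations whose cluster geometries change as $j$ varies, so controlling all of them at once is a substantial and unaddressed difficulty. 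In addition, bounding the increment by ``the local volume contribution of $B$'' needs a pointwise (not just expected) bound of the form $\tilde{\mathcal{C}}(B)\le C\,m^2\pi(m)$ for all $2K$ box-restrictions of $\omega^\pm$ on the tame event, which again must be shown compatible, with uniformly positive probability, with the rare events $\{M_0\ge b\,n^2\pi(n)\}$ and $\{M_K\le a\,n^2\pi(n)\}$. In short, the framework is plausible and elegant, but the one step that is asserted rather than proved is precisely the hard core of the problem, and the paper's circuit-and-corridor construction exists to avoid rather than to verify such a stability-under-resampling statement.
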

\smallskip
The proof is given in Section \ref{sect-proof}. Section \ref{sect-notation} will list the main ingredients 
used in the
proof. The proof involves a suitable partition of $\La_n$ in smaller boxes and annuli. A brief and informal summary
is given in the beginning of Section \ref{sect-proof}, after the description of these objects. 

\section{Ingredients for the proof of Theorem \ref{thm:maxClusterInInt}}\label{sect-notation}
We will make amply use of standard RSW results of the following form: 
For all $l > 0$ there exists $\delta(l) > 0$ such that, for all $k$ and all $p$ with
$L(p) \geq k$,
$\prob_{p}(HC(k,\lceil l k \rceil)) \ge \delta(l)$.
%
%
For a set $W$ of vertices define 
\begin{equation}\label{eq:defCtilde}
 \tilde{\mathcal{C}}(W) = |\{ v \in W: v \lra \partial W\}|.
\end{equation}
For $k, n \in \N$, we use the notation $\La_{k,n}$ for the rectangle $[-k,k] \times [-n,n]$.
We will use the following properties of $\pi_{p}(n)$ from the literature. 
\begin{theorem}\label{lem:PiBounds} There exist $\alpha, C_{1}, \cdots, C_6 >0$ such that: 
 \begin{enumerate}[(i)]
  \item For all $m \le n$:
   \begin{equation*}
    C_{1}(\frac{n}{m})^{\alpha} \le \frac{\pi(m)}{\pi(n)} \le  C_{2}(\frac{n}{m})^{\frac{1}{2}}.
   \end{equation*}
  \item For all $n \in \mathbb{N}$:
   \begin{equation*}
    \sum_{k=0}^{n} \pi(k) \le C_{3}\cdot n\pi(n).
   \end{equation*}
  \item For all $p\in (0,1)$ and all $n \le L(p)$,
   \begin{equation*}
    C_{4}\pi(n) \le \pi_{p}(n) \le C_{5}\pi(n).
   \end{equation*}
   \item 
For all $k, n \in \mathbb{N}$ and $p$ with $L(p) \ge k \wedge n$, 
   \begin{equation*}
   \mathbb{E}_p[\tilde{\mathcal{C}}(\Lambda_{k,n})] \leq C_6 k n \, \pi( k\wedge n).
   \end{equation*}
 \end{enumerate}
\end{theorem}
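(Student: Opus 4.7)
The plan is to assemble parts (i)--(iv) from the standard RSW and quasi-multiplicativity machinery for two-dimensional near-critical percolation, as developed in \cite{N08} (with origins in \cite{K87}). Parts (i)--(iii) are essentially restatements of results proved there, so I would invoke them by citation with at most a one-line reminder of the underlying mechanism; only part (iv) requires a short independent argument.

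For (i), Kesten's quasi-multiplicativity $\pi(n) \asymp \pi(m)\,\pi(m,n)$ combined with the polynomial annulus bound $\pi(m,n) \le C (m/n)^{\alpha}$ (which in turn follows from RSW together with a BK argument applied to the 3-arm half-plane event) yields the lower bound $\pi(m)/\pi(n) \ge C_{1}(n/m)^{\alpha}$. The upper bound $\pi(m)/\pi(n) \le C_{2}(n/m)^{1/2}$ is equivalent to the classical one-arm lower bound $\pi(n) \ge c\, n^{-1/2}$, proved via a dual-circuit (Zhang-type) argument in an annulus. Part (ii) is an immediate corollary of the upper bound in (i):
\[
\sum_{k=1}^{n}\pi(k) \;\le\; C_{2}\sqrt{n}\,\pi(n)\sum_{k=1}^{n} k^{-1/2}\;\le\; 2 C_{2}\,n\,\pi(n).
\]
Part (iii) is the well-known comparability of $\pi_{p}$ to $\pi$ below the characteristic length (see e.g.\ Proposition~34 of \cite{N08}): all crossing and arm probabilities at parameter $p$ agree with their critical counterparts, up to multiplicative constants depending only on the fixed $\epsilon$ in the definition of $L$, on all scales $n \le L(p)$.

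The one part I would write out in full is (iv). Start from
\[
 \mathbb{E}_{p}\bigl[\tilde{\mathcal{C}}(\La_{k,n})\bigr] \;=\; \sum_{v \in \La_{k,n}} \prob_{p}\bigl(v \lra \partial \La_{k,n}\bigr) \;\le\; \sum_{v \in \La_{k,n}} \pi_{p}\bigl(d(v)\bigr),
\]
where $d(v):=\mathrm{dist}(v,\partial\La_{k,n})$ and the inequality uses that any open path from $v$ to $\partial\La_{k,n}$ must exit the $\ell^{\infty}$-ball of radius $d(v)$ around $v$. Setting $m:=k\wedge n$, an elementary count shows that the number of vertices of $\La_{k,n}$ at distance exactly $d$ from the boundary is at most $4(k+n)$ for each $d\in\{0,1,\ldots,m\}$. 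Therefore, applying (iii) on scales $d\le m\le L(p)$ and then (ii),
\[
 \mathbb{E}_{p}\bigl[\tilde{\mathcal{C}}(\La_{k,n})\bigr] \;\le\; 4(k+n)\sum_{d=0}^{m}\pi_{p}(d) \;\le\; 4\, C_{5}C_{3}\,(k+n)\,m\,\pi(m),
\]
and since $(k+n)\,m \le 2 kn$ this gives the claim with $C_{6}:=8 C_{3}C_{5}$.

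The only real care needed is to match the hypothesis $L(p) \ge k\wedge n$ in (iv) to the requirement $d \le L(p)$ for invoking (iii) at each scale $d$ in the sum; this is precisely why the theorem is stated with $k\wedge n$ rather than $k\vee n$. Apart from this bookkeeping, the argument does not go beyond citing the RSW/quasi-multiplicativity toolkit.
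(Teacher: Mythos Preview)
Your proposal is correct and matches the paper's own treatment almost exactly: both cite the literature for (i)--(iii) and give the same short computation for (iv), bounding each summand by $\pi_p(d(v,\partial\La_{k,n}))$, grouping by distance, and then applying (iii) and (ii). Your explicit one-line derivation of (ii) from the upper bound in (i) is a small bonus over the paper's bare citation of \cite{K86}; the only imprecision is your remark that the upper bound in (i) is ``equivalent to'' $\pi(n)\ge c\,n^{-1/2}$ --- that is only one implication, and the actual proof (going back to \cite{BK85}) establishes the annulus estimate $\pi(m,n)\ge c(m/n)^{1/2}$ directly, which together with quasi-multiplicativity gives the two-scale inequality.
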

\begin{proof}
The inequalities in (i) are well-known. (The first follows easily from RSW arguments, and the second 
goes back to \cite{BK85}; see also for example \cite{BCKS01}).
Part (ii) follows from (7) in \cite{K86}.
Part (iii) is Theorem 1 in \cite{K87}.
Part (iv), of which versions  are explicitly in the literature (see e.g. \cite{K86}, \cite{K87} and \cite{N08}),
is proved as follows (where we assume that $k \leq n$):
\begin{eqnarray}
 \, & \, &  \mathbb{E}_p[\tilde{\mathcal{C}}(\Lambda_{k,n})] = \sum_{v \in \Lambda_{k,n}}
\prob_p(v \lra \partial \Lambda_{k,n}) \nonumber\\
\,  & \le & \sum_{v \in \Lambda_{k,n}} \pi_p(d(v, \partial \Lambda_{k,n})) \nonumber
\,\, \le \,\, 8n\sum_{l=0}^{k} \pi_{p}(l) \le C_{6} n k \pi(k),
\end{eqnarray}
where the last inequality uses part (ii) and (iii). \qed
\end{proof}


\medskip
Define
\begin{equation*}
 Y(m) = |\{ v \in \Lambda_{m}: v \lra \partial\Lambda_{2m} \}|.
\end{equation*}
We need the following result for the distribution of $Y(m)$, which is essentially in
\cite{BCKS99} and (for the special case $p = 1/2$) \cite{K86}.

\begin{theorem} \label{lem:momentBoundExpConnBoundary}
There exist $\delta_{1}, C_{7} > 0$ such that, for all $p\in (0,1)$ and all $m \le L(p)$:
\begin{equation}\label{eq:posProbEnoughVertConn}
 \prob_{p}(Y(m) \ge C_{7}m^{2}\pi(m)) \ge \delta_{1}.
\end{equation}
\end{theorem}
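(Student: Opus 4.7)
The plan is a standard second--moment (Paley--Zygmund) argument. I will establish the two bounds
\[
\E_p[Y(m)] \ge c_1\, m^2 \pi(m) \quad\text{and}\quad \E_p[Y(m)^2] \le c_2\, (m^2 \pi(m))^2,
\]
with constants $c_1,c_2 > 0$ uniform in $p$ and $m$ as long as $m \le L(p)$. Paley--Zygmund then immediately yields
\[
\P_p\bigl(Y(m) \ge \tfrac{c_1}{2} m^2 \pi(m)\bigr) \ge \frac{c_1^2}{4 c_2},
\]
which is (\ref{eq:posProbEnoughVertConn}) with $C_7 := c_1/2$ and $\delta_1 := c_1^2/(4 c_2)$.

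For the first moment, I restrict attention to $v$ in a sub-box $\Lambda_{m/10}$ of $\Lambda_m$ (still of size $\asymp m^2$), so that there is plenty of room inside $\Lambda_{2m}$ around each such $v$. For such $v$, the event $\{v \lra \partial B(v,m)\}$ has probability $\pi_p(m)$, and by Theorem~\ref{lem:PiBounds}(iii) (using $m \le L(p)$) this is $\ge C_4 \pi(m)$. A standard RSW/FKG gluing at scale $m \le L(p)$ then extends the arm from $\partial B(v,m)$ to $\partial \Lambda_{2m}$ at the cost of only a multiplicative constant, yielding $\P_p(v \lra \partial \Lambda_{2m}) \ge c\,\pi(m)$. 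Summing over $v \in \Lambda_{m/10}$ gives the claimed first-moment lower bound.

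For the second moment, I exploit the containment $\Lambda_m \subseteq \Lambda_{2m}$, which gives $Y(m) \le \tilde{\mathcal{C}}(\Lambda_{2m})$; it therefore suffices to show $\E_p[\tilde{\mathcal{C}}(\Lambda_{2m})^2] \le C (m^2 \pi(m))^2$. This second-moment estimate is part of the cluster-volume machinery developed in \cite{BCKS99} and \cite{K86}, of which Theorem~\ref{lem:PiBounds}(iv) is the first-moment counterpart. Alternatively, one can derive it directly by writing $\E_p[Y(m)^2] = \sum_{v,w \in \Lambda_m} \P_p(v, w \lra \partial \Lambda_{2m})$ and splitting by whether $v \lra w$: for disconnected pairs, BK gives a contribution $O(m^4 \pi(m)^2)$; for connected pairs one needs the `Y-shape' bound $\P_p(v \lra w, v \lra \partial \Lambda_{2m}) \le C \pi(|v-w|) \pi(m)$, and summing this via Theorem~\ref{lem:PiBounds}(ii) (which controls $\sum_r r\,\pi(r) \le C m^2 \pi(m)$) also produces $O(m^4 \pi(m)^2)$.

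The main obstacle is the second-moment bound: the first-moment step is a routine RSW manipulation, but the second moment either requires quoting the BCKS/Kesten cluster-volume framework or executing the Y-shape decomposition with careful separation of three arms at a branching point, for which one needs quasi-multiplicativity and RSW across several scales. Once both moment bounds are in place, Paley--Zygmund closes the argument immediately.
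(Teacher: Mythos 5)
Your proposal is correct and matches the paper's own proof in all essentials: the paper likewise establishes $\E_p[Y(m)] \ge C_9 m^2\pi(m)$ from Theorem~\ref{lem:PiBounds}(i),(iii) and quotes Lemma~6.1 of \cite{BCKS99} for $\E_p[Y(m)^2] \le C_8 (m^2\pi(m))^2$, then applies one-sided Chebyshev (Cantelli), which is interchangeable with your Paley--Zygmund step. The only cosmetic difference is in how the first-moment lower bound is organized (the paper uses that for $v\in\Lambda_m$ one has $\Lambda_{2m}\subset B(v,3m)$ plus part (i), rather than your restriction to $\Lambda_{m/10}$ with RSW gluing), but both routes are routine and equivalent.
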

\begin{proof}
By Lemma 6.1 in \cite{BCKS99} there
exists $C_{8} > 0$ such that for all $p \in (0,1)$ and $m \le L(p)$,
$\mathbb{E}_{p}[(Y(m))^{2}] \le C_{8} (m^{2}\pi(m))^{2}$.
Further, by the definition of $\pi(n)$ and parts (i) and (iii) of Theorem \ref{lem:PiBounds},
there exists $C_{9} > 0$ such that
$\mathbb{E}_{p}[Y(m)] \ge C_{9}m^{2}\pi(m)$.
These two inequalities, and the one-sided Chebyshev's inequality, give
Theorem \ref{lem:momentBoundExpConnBoundary}. \qed
\end{proof}

\medskip
It was shown in \cite{K86} (and extended/generalized in \cite{BCKS99}
and \cite{BCKS01}) that $\mathcal{M}_{n}$, the size of the largest open cluster in $\La_n$, is typically of
order $n^2 \pi(n)$. In particular, its expectation has an upper and a lower bound which are linear in
$n^2 \pi(n)$. In the proof of Theorem \ref{thm:maxClusterInInt} we use the following result from \cite{BCKS01}.
\begin{theorem}\label{thm:maxClustCanBeSmall} {\em \bf (\cite{BCKS01} Thm. 3.1 (i), Thm. 3.3 (ii))} \\
Let $p_{n}$ be a sequence, such that $n \le L(p_{n})$ for all $n$. Then 
for all $K > 0$,
 \begin{equation*}
  \liminf_{n \to \infty} \prob_{p_{n}}\left(\frac{\mathcal{M}_{n}}{n^2 \pi(n)} < K\right) > 0.
 \end{equation*}
\end{theorem}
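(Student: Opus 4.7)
Fix $K>0$. The idea is to tile $\La_n$ by an $M\times M$ grid of subboxes $B_1,\dots,B_{M^2}$ of side $m\sim 2n/M$ (with $M=M(K)$ to be chosen as a function of $K$ alone), force a grid of closed (dual) crossings along each of the $2(M-1)$ interior grid lines so as to disconnect the $B_i$ from one another, and then bound the tail of the maximum cluster inside each subbox by a second-moment estimate. Let $D$ be the event that along every interior grid line there is a closed crossing of $\La_n$ in the long direction, confined to a thin strip of width $O(n/M)$ about that line. Each such crossing concerns a rectangle of aspect ratio $O(M)$; by standard RSW (uniform for $p$ with $L(p)\ge n$) its probability is bounded below by a constant $\alpha(M)>0$ depending only on $M$. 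Since $D$ is the intersection of decreasing events, FKG gives
\[
\P_p(D) \ge \alpha(M)^{2(M-1)} =: \rho(M) > 0,
\]
uniformly in $n$ and in $p$ with $L(p)\ge n$. On $D$ the grid of closed crossings disconnects the subboxes, so every open cluster in $\La_n$ lies entirely inside a single $B_i$ and $\mathcal{M}_n\mathbf{1}_D \le \max_i \mathcal{M}^{(B_i)}\mathbf{1}_D$, where $\mathcal{M}^{(B_i)}$ denotes the largest open cluster contained in $B_i$.

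\textbf{Per-subbox second moment.} In parallel with the bound on $\E_p[Y(m)^2]$ used in the proof of Theorem \ref{lem:momentBoundExpConnBoundary} (Lemma 6.1 of \cite{BCKS99}), one has the uniform estimate $\E_p[(\mathcal{M}^{(B_i)})^2] \le C(m^2\pi(m))^2$ for all $p$ with $L(p)\ge m$. Since $(\mathcal{M}^{(B_i)})^2$ is an increasing function of the configuration and $D$ is a decreasing event, FKG gives $\E_p[(\mathcal{M}^{(B_i)})^2\mid D] \le \E_p[(\mathcal{M}^{(B_i)})^2]$. Combining with Theorem \ref{lem:PiBounds}(i) --- which, taking $n/m=M/2$, yields $m^2\pi(m) \le C' n^2\pi(n)/M^{3/2}$ --- and applying Markov's inequality to $(\mathcal{M}^{(B_i)})^2$, one obtains
\[
\P_p\bigl(\mathcal{M}^{(B_i)} \ge K n^2\pi(n) \,\bigm|\, D\bigr) \le \frac{C(m^2\pi(m))^2}{(Kn^2\pi(n))^2} \le \frac{C''}{K^2 M^3}.
\]
A union bound over the $M^2$ subboxes gives $\P_p(\mathcal{M}_n \ge Kn^2\pi(n)\mid D) \le C'''/(K^2M)$. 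Choosing $M=M(K):=\lceil 2C'''/K^2\rceil$ makes the right-hand side $\le 1/2$, whence
\[
\P_p\bigl(\mathcal{M}_n < K n^2\pi(n)\bigr) \ge \tfrac{1}{2}\P_p(D) \ge \tfrac{1}{2}\rho(M(K)) > 0
\]
uniformly in $n$ large and in $p$ with $L(p)\ge n$, proving the claimed liminf bound.

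\textbf{Main obstacle.} The essential algebraic balance is that the $M^{-3/2}$ factor from Theorem \ref{lem:PiBounds}(i) must be \emph{squared}, so that after the $M^2$-fold union bound there is a net $M^{-1}$ decay. Using only the first moment of $\mathcal{M}^{(B_i)}$ would give an estimate of order $CM^{1/2}/K$ that grows with $M$ and is useless; the second moment (combined with FKG to discard the conditioning on $D$) is what makes the scheme work. Note that $M$ is chosen as a function of $K$ alone, so the (possibly rapid) decay of $\rho(M)$ with $M$ is harmless: once $K$ is fixed, $\rho(M(K))$ is a fixed positive constant independent of $n$. The most technical point is establishing the uniform second-moment bound for the actual maximum cluster $\mathcal{M}^{(B_i)}$, not merely for $Y(m)$; for this one needs the analysis of \cite{BCKS99} (or \cite{BCKS01}) rather than the bare quotation of Lemma 6.1 of \cite{BCKS99}.
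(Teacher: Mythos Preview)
The paper does not give its own proof of this statement: Theorem~\ref{thm:maxClustCanBeSmall} is quoted verbatim from \cite{BCKS01} (Theorems~3.1(i) and~3.3(ii)) and used as a black box in Section~\ref{sect-compl}. So there is no in-paper proof to compare against.

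Your argument is essentially correct and is close in spirit to the block argument in \cite{BCKS01}. A few comments:
\begin{itemize}
\item The disconnection step needs a small adjustment: on $D$ a cluster is trapped between consecutive closed crossings, hence in a region that may protrude into the strips; it is not literally contained in a single $B_i$. Enlarging the $B_i$ by the strip width (so that they overlap) restores the inclusion $\mathcal{M}_n\mathbf{1}_D \le \max_i \mathcal{M}^{(B_i)}$ without changing the $M^{-3/2}$ arithmetic.
\item For the uniform RSW bound on \emph{closed} crossings when $L(p)\ge n$, one uses that $L_\epsilon(p)$ and $L_\epsilon(1-p)$ are comparable by duality, so that closed crossings at scale $n$ are governed by the same near-critical window.
\item The second-moment input $\E_p[(\mathcal{M}^{(B)})^2]\le C(m^2\pi(m))^2$ is indeed available without circularity: from the identity $\sum_{v\in B}\mathcal{C}_B(v)=\sum_{C}|C|^2\ge (\mathcal{M}^{(B)})^2$ one gets $\E_p[(\mathcal{M}^{(B)})^2]\le \sum_{v,w\in B}\P_p(v\leftrightarrow w)$, and the two-point bound $\P_p(v\leftrightarrow w)\le C\,\pi(|v-w|/2)^2$ (BK/quasi-multiplicativity), summed using Theorem~\ref{lem:PiBounds}(i)--(ii), yields the claim. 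This is exactly the computation behind Lemma~6.1 of \cite{BCKS99}, so your ``in parallel with'' is justified.
\end{itemize}
In \cite{BCKS01} the organization is somewhat different: Theorem~3.1 first establishes (stretched-)exponential upper tails for $\mathcal{M}_n/(n^2\pi(n))$, and Theorem~3.3(ii) then runs the block construction using those tails instead of a raw second moment. Your version is a shade more elementary in that it bypasses the tail estimates, paying instead with the explicit $M^{-3}$ versus $M^{2}$ bookkeeping that you correctly identify as the crux.
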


Finally, to streamline the arguments in Section \ref{sect-compl} at the end of the proof of Theorem \ref{thm:maxClusterInInt},
we state here the following fact about `steering' the outcome of the sum of independent random variables.
It is a simple observation rather than a lemma, and versions of it have without doubt been
used in the probability literature in various contexts.
\begin{lemma}\label{lem:sumOfRandomVars} Let $0 < \al < \be$,
and let $k \in \N$ be such that $\al/k < (\be - \al)/2$.
Further, let $\eta_{1}, \eta_{2} > 0$ and let $(X_{i})_{1 \leq i \le k}$ be independent random variables,
(not necesarrily identically distributed) which satisfy the following:
\begin{eqnarray*}
 \prob\left(X_{i} \in (\frac{\al}{k}, \frac{\be-\al}{2})\right) & \ge & \eta_{1};\\
 \prob\left(X_{i} \le \frac{\be-\al}{2k} \right) & \ge & \eta_{2}.
\end{eqnarray*}
Then
\begin{equation*}
 \prob\left(\sum_{i=1}^{k} X_{i} \in (\al, \be)\right) \ge (\eta_{1}\wedge \eta_{2})^{k}.
\end{equation*}
\end{lemma}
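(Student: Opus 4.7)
The plan is to steer the partial sums $S_i:=X_1+\cdots+X_i$ adaptively into the target interval by choosing, at each step, which of the two hypothesised events we want $X_i$ to fall into. Concretely, for $i=1,\ldots,k$, define the ``desired'' event
\begin{equation*}
E_i = \begin{cases} \{X_i\in(\alpha/k,\,(\beta-\alpha)/2)\} & \text{if } S_{i-1}\le\alpha, \\ \{X_i\le(\beta-\alpha)/(2k)\} & \text{if } S_{i-1}>\alpha. \end{cases}
\end{equation*}
The branch selector $\{S_{i-1}\le\alpha\}$ is measurable with respect to $X_1,\ldots,X_{i-1}$, and $X_i$ is independent of these, so on either branch the hypothesis of the lemma yields $\prob(E_i\mid X_1,\ldots,X_{i-1})\ge\eta_1\wedge\eta_2$. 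Multiplying the conditional probabilities along the history then gives $\prob\bigl(\bigcap_{i=1}^k E_i\bigr)\ge(\eta_1\wedge\eta_2)^k$, which matches the bound in the lemma.

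It remains to check that on $\bigcap_i E_i$ one really has $S_k\in(\alpha,\beta)$. Set $\tau:=\inf\{i\le k:S_i>\alpha\}$. The case $\tau=+\infty$ cannot occur on this event, since then every step would be ``large'' and $S_k>k\cdot(\alpha/k)=\alpha$, a contradiction; so $\tau\le k$. Step $\tau$ is ``large'' (because $S_{\tau-1}\le\alpha$) and contributes strictly less than $(\beta-\alpha)/2$, so $S_\tau<\alpha+(\beta-\alpha)/2=(\alpha+\beta)/2$. Each of the remaining $k-\tau$ ``small'' steps contributes at most $(\beta-\alpha)/(2k)$, so at most $(\beta-\alpha)/2$ in total, giving $S_k<\beta$. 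On the other hand $S_k\ge S_\tau>\alpha$, using that in the ``small'' regime the increments are nonnegative, as is the case in every application in this paper where the $X_i$ are sizes of open clusters.

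I do not see any real obstacle beyond bookkeeping. The hypothesis $\alpha/k<(\beta-\alpha)/2$ is precisely what guarantees that the ``large'' interval is non-empty and that the two regimes of the adaptive strategy are compatible with the defining inequalities; the nonnegativity of the $X_i$ (implicit from the application) is what keeps the ``small'' steps from dragging $S_k$ back below $\alpha$. No RSW or percolation input is needed, which is why the authors classify the statement as an observation rather than a lemma.
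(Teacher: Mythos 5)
Your proof is correct and follows essentially the same adaptive ``steering'' strategy as the paper (the event $E_i$ you define is exactly the paper's notion of ``step $i$ is proper,'' modulo an inconsequential choice of $\le$ versus $<$ in the branch selector). The one genuinely valuable thing you add is the explicit observation that nonnegativity of the $X_i$ is needed to close the argument, and you are right that this is an implicit assumption: as literally stated the lemma is false. A counterexample: take $k=3$, $\alpha=1$, $\beta=100$, and let the $X_i$ be i.i.d.\ with $\prob(X_i=49)=\prob(X_i=-10^4)=1/2$; then $\eta_1=\eta_2=1/2$ are admissible, so the lemma would give $\prob(\sum X_i\in(1,100))\ge 1/8$, yet $\sum X_i$ only takes the values $147,\,-9902,\,-19951,\,-30000$, so the probability is $0$. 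The step in the paper's proof where this goes unnoticed is the assertion ``It is clear that if all steps are proper, then $\sum_i X_i\in(\alpha,\beta)$'': once a large negative small-step occurs, the partial sum can drop back below $\alpha$, the regime flips back to ``large,'' and nothing prevents the final sum from exiting $(\alpha,\beta)$. With $X_i\ge 0$ the partial sums are monotone, so once $S_\tau>\alpha$ all remaining steps are small and your bookkeeping gives both $S_k>\alpha$ and $S_k<\beta$. (Your formulation ``in the small regime the increments are nonnegative'' is in fact precise: in the large regime $X_i>\alpha/k>0$ automatically.) Since the paper only applies the lemma to $X_{i}=\mathcal{C}^{\tilde\gamma_{i,j}}$, which are cardinalities and hence nonnegative, the application is unaffected, but the lemma statement should carry the hypothesis $X_i\ge 0$.
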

\begin{proof}
For $1 \leq i \leq k$ we say that `step $i$ is proper' if
$$
X_i  \begin{cases} \in (\frac{\al}{k}, \frac{\be - \al}{2}) & \mbox{ if } \sum_{j=1}^{i-1} X_j < \al \\
\leq \frac{\be - \al}{2 k} & \mbox{ otherwise. } \end{cases}
$$ 
It is clear that if all steps $i =1, \cdots, k$ are proper, then $\sum_{i=1}^k X_i \in (\al,\be)$.
It is also easy to see that, for each $i$, the
conditional probability that step $i$ is proper, given that all steps $1, \cdots, i-1$ are proper,
is at least $\min(\eta_1, \eta_2$). \qed
\end{proof}

\section{Proof of Theorem \ref{thm:maxClusterInInt}} \label{sect-proof}
We first give a proof for the special case $p = 1/2$ and therefore drop the subscript $p$ from the notation $\P_p$ and
$\E_p$. At the end of Section \ref{sect-compl} we point out that (due to the `uniformity' of the ingredients stated
in Section \ref{sect-notation}) the proof for the general case is essentially the same.

\subsection{More definitions, and brief outline of the proof}\label{sec:proofDefi}

Let $s, t \in \mathbb{N}$ with $t \le \frac{1}{3}s$.
The proof involves a construction using the following boxes and annuli.
\begin{eqnarray*}
 B_{0,0} & = & \Lambda_{s}.\\
 A_{0,0}^{I} & = & \Lambda_{s} \setminus \Lambda_{s - t}; \,\,\,\,
A_{0,0}^{II}  =  \Lambda_{s -t} \setminus \Lambda_{s - 2t}; \,\,\,\,
 A_{0,0}^{III}  =  \Lambda_{s -t} \setminus \Lambda_{s - 2t}. \\
 A_{0,0}' & = & A_{0,0}^{I} \cup A_{0,0}^{II} \cup A_{0,0}^{III}.\\
 H_{0,0} & = & ([0,4t]\times [0,t] + (s-2t, 0)) \cap \mathbb{Z}^{2}.\\
 V_{0,0} & = & ([0,t]\times [0,4t] + (0, s-2t)) \cap \mathbb{Z}^{2}.
\end{eqnarray*}
More generally, for all $i,j \in \mathbb{Z}$ we define $B_{i,j} = B_{0,0} + (2is,2js)$, 
$A_{i,j}^{I} = A_{0,0}^{I} + (2is,2js)$, etcetera.

\begin{figure}
 \centering
 \scalebox{1.5}{\includegraphics{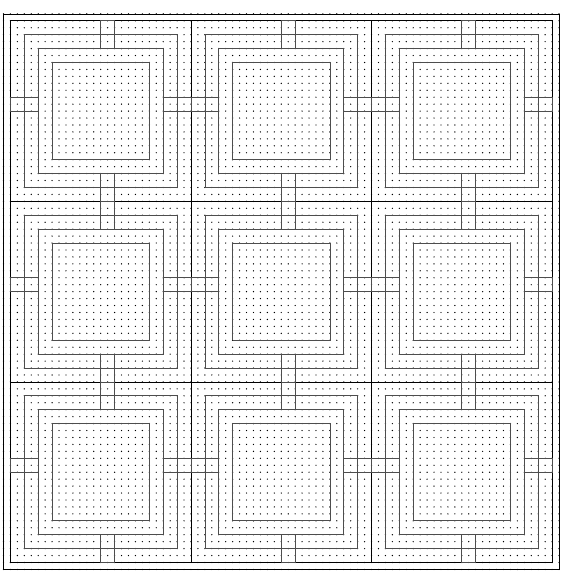}}
 \caption{Partition of the box $\Lambda_{n}$. Here $n=40, m = 3, s=13, t=2$.}
 \label{fig:smallboxes}
\end{figure}
\smallskip
Before we go on, we give a very brief and informal summary of the proof (see Figure \ref{fig:smallboxes}):
The box $\La_n$ in the statement of the theorem will
be (roughly) partitioned in $m^2$ boxes $B_{i,j}$ defined above, where the $s$ (and hence $m$) and $t$ will be
chosen appropriately, depending on $n$, $a$ and $b$. (For elegance/symmetry we take the number $m$ odd).
We will `construct' an open cluster of which the `skeleton' consists of circuits in the annuli $A_{i,j}^{II}$,
`glued' together by connections in the `corridors' $V_{i,j}$ and $H_{i,j}$. (The other annuli defined
above will be used for technical reasons in the proof). The setup is such that
the contributions from the different $B_{i,j}$'s to the total cluster size are roughly independent,
and that these contributions can be `steered' to get the total sum inside the desired interval.
In some sense this replaces the original problem for
the box $\La_n$ by a similar problem, but now for the smaller boxes $B_{i,j}$.
Apart from the technicalities involving the
control of local dependencies, there is a subtle aspect in the proof related to the
asymptotic behaviour of $\pi(n)$: Although the precise power-law behaviour of $\pi(n)$ is not
important, it seems to be essential for the arguments that the exponent in a power-law upper
bound is strictly smaller than $1$ (see the note at the
end of the proof of Lemma \ref{lem:chooseNxEps})).

\medskip
Now we continue with the precise constructions mentioned above. First we give some more notation and definitions.
Let $E$ denote the set of edges of $\mathbb{Z}^{2}$ and $\Omega = \{0,1\}^{E}$.
For $\omega \in \Omega$ and $F \subset E$ we will write $\omega_{F} \in \{0,1\}^{F}$ for the `restriction'
$(\omega_e, e \in F)$ of $\omega$ to $F$.
Let $A \subset \Omega$ and $W \subset \mathbb{Z}^{2}$. We write $E(W)$ for the set of all edges of
which both endpoints are in $W$.
Informally, we use the notation $A(W)$ for the set of all configurations $\omega \in \Omega$ that belong to $A$
or can be turned to an element of $A$ by modifying $\omega$ outside $E(W)$.
More precisely,
\begin{equation}\label{eq:notationRestrOfIncrEvent}
 A(W) = \{\omega_{E(W)}: \omega \in A\} \times \{ 0,1\}^{E\setminus E(W)}.
\end{equation}

\begin{figure}
 \centering
 \scalebox{0.4}{\includegraphics{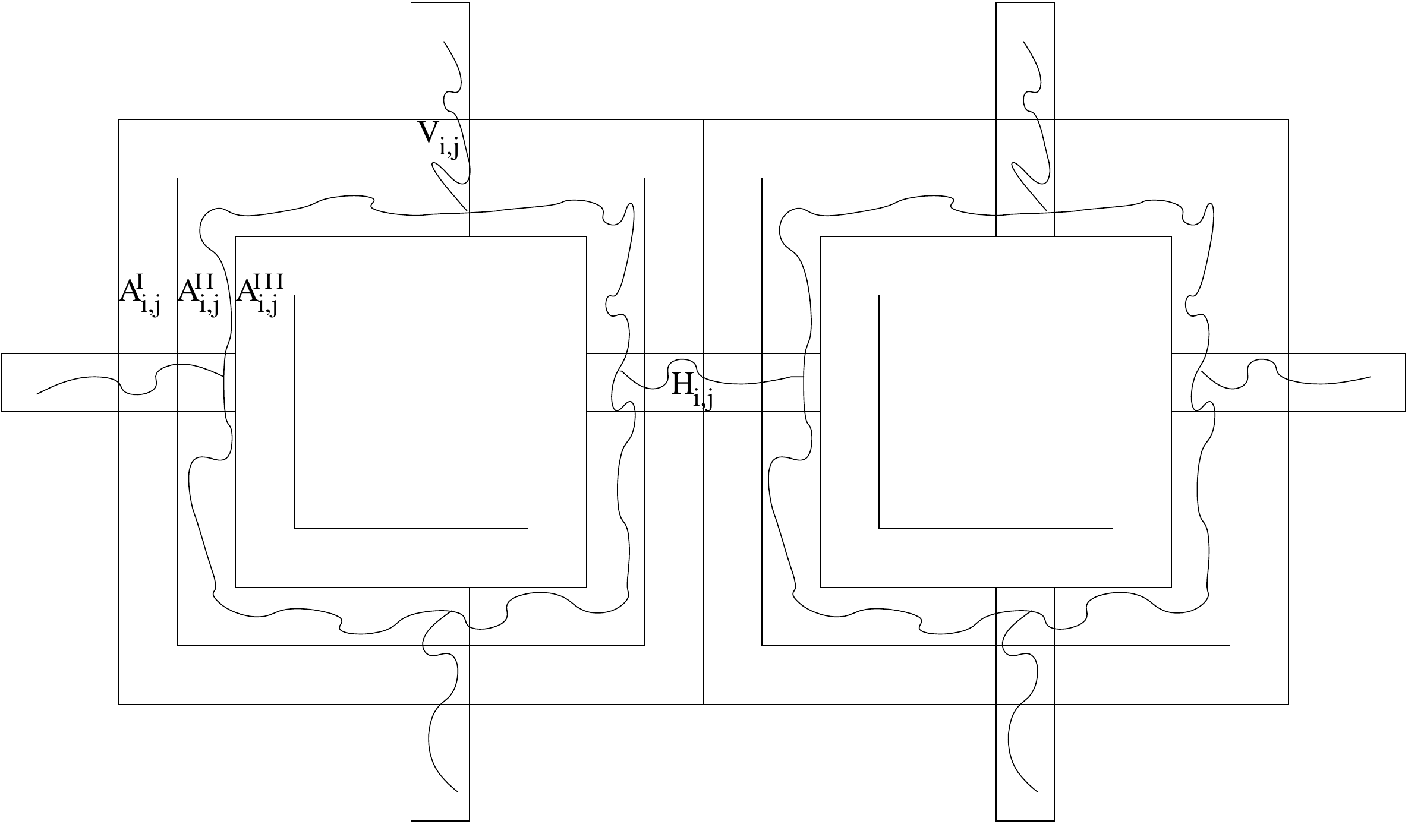}}
 \caption{Illustration of the event $\tilde{O}^{s,t}$.}
 \label{fig:eventO}
\end{figure}
We denote by $\tilde{O}^{s,t}$ the event that (i) - (iii) below occur (see Figure \ref{fig:eventO}):
\begin{enumerate}[(i)]
 \item $\forall i,j\in \mathbb{Z}$: the annulus $A_{i,j}^{II}$ contains an open circuit;
 \item $\forall i,j\in \mathbb{Z}$: $H_{i,j}$ contains an open connection between the two widest open circuits in the annuli $A_{i,j}^{II}$
and $A_{i+1,j}^{II}$;
 \item  $\forall i,j\in \mathbb{Z}:$ $V_{i,j}$ contains an open connection between the two widest open circuits in the annuli $A_{i,j}^{II}$
and $A_{i,j+1}^{II}$.
\end{enumerate}
The introduction of this event looks meaningless since it has probability $0$. It will only be used
to give a `compact' description of the following events (which do play a key role in the proof). 
\begin{definition}\label{def:O} Let $m, s, t \in \mathbb{N}$, with $t \le \frac{1}{3}s$ and $m$ odd. Let $i, j \in \Z$.
We define, using notation \eqref{eq:notationRestrOfIncrEvent}, the following events:
 \begin{eqnarray*}
  O^{m,s,t} & = & \tilde{O}^{s,t}(\Lambda_{ms}).\\
  O_{i,j}^{s,t} & = & \tilde{O}^{s,t}(B_{i,j}).
 \end{eqnarray*}
\end{definition}
\textbf{Remark:} From now on, for given $m,s,t$, the indices $i,j$ under consideration will always be assumed to
be in the set $\{-\frac{1}{2}(m-1), \cdots , 0, \cdots, \frac{1}{2}(m-1)\}$.

\subsection{Expected cluster size in a narrow annulus}\label{sect-czann}
For a circuit $\gamma$ in $\mathbb{Z}^{2}$ we denote by Int$(\gamma)$
the bounded connected component of $\mathbb{Z}^{2} \setminus \gamma$, and define
\begin{equation}\label{eq:defCijGamma}
\mathcal{C}^{\gamma} = |\{ v \in \mathrm{Int}(\gamma): v \lra \gamma \}|.
\end{equation}
Further, for all $i, j$, let $\gamma_{i,j}$ denote the widest open circuit in the annulus $A_{i,j}^{II}$, and
define, for $W \subset \La_n$,
\begin{equation} \label{eq:defCij}
 \mathcal{C}_{i,j}(W) =  |\{ v \in W: v \lra \gamma_{i,j} \}|.
\end{equation}
If there is no  open circuit in $A_{i,j}^{II}$, then $\mathcal{C}_{i,j}(W) = 0$.

\smallskip
Recall the definition of $\tilde{\mathcal{C}}$ in \eqref{eq:defCtilde}.
\begin{lemma}\label{lem:ExpSizeEpsxX} There exists a constant $C_{10} > 0 $ such that for all
$s \in \mathbb{N}$, $t \le \frac{1}{3}s$ and all $i,j$:
\begin{equation*}
 \mathbb{E}[\mathcal{C}_{i,j}(A_{i,j}') | O_{i,j}^{s,t}] \le \mathbb{E}[\tilde{\mathcal{C}}(A_{i,j}') | O_{i,j}^{s,t}]
\le C_{10} st \pi(t).
\end{equation*}
\end{lemma}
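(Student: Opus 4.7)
The first inequality will hold pointwise on $O_{i,j}^{s,t}$. Indeed, by the definition of $\tilde{O}^{s,t}$, the portion of the corridor $H_{i,j}$ lying inside $B_{i,j}$ carries an open path from $\gamma_{i,j}$ to $\partial B_{i,j}$; since $\partial B_{i,j}$ is contained in the outer component of $\partial A_{i,j}'$, the circuit $\gamma_{i,j}$ is connected to $\partial A_{i,j}'$, and hence every $v\in A_{i,j}'$ with $v\lra\gamma_{i,j}$ also satisfies $v\lra\partial A_{i,j}'$.

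For the second inequality, I will first establish the corresponding \emph{unconditional} bound $\mathbb{E}[\tilde{\mathcal{C}}(A_{i,j}')]\le C\,s\,t\,\pi(t)$. The annulus $A_{i,j}'$, of outer side $2s$ and width $3t$, decomposes into four (approximate) rectangles $R_1,\ldots,R_4$ of dimensions roughly $(3t)\times(2s)$. If $v\in R_k$ and $v\lra\partial A_{i,j}'$, then either the witness path stays in $R_k$ (and so meets $\partial A_{i,j}'\cap R_k\subset\partial R_k$) or it exits $R_k$ first (crossing $\partial R_k$); in either case $v\lra\partial R_k$ using edges in $R_k$, so $\tilde{\mathcal{C}}(A_{i,j}')\le\sum_k\tilde{\mathcal{C}}(R_k)$. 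Theorem~\ref{lem:PiBounds}(iv) applied to each $R_k$ gives $\mathbb{E}[\tilde{\mathcal{C}}(R_k)]\le C_6\cdot(3t)(2s)\pi(3t)$, and Theorem~\ref{lem:PiBounds}(i) yields $\pi(3t)\le C'\pi(t)$; summing over $k$ produces the unconditional estimate.

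The main obstacle is upgrading this to the \emph{conditional} bound. Since $\P(O_{i,j}^{s,t})$ may decay polynomially as $s/t\to\infty$, the crude estimate $\mathbb{E}[\tilde{\mathcal{C}}\mathbf{1}_O]/\P(O)\le \mathbb{E}[\tilde{\mathcal{C}}]/\P(O)$ is far too weak. The plan is to condition further on the widest open circuit $\gamma$ in $A_{i,j}^{II}$. Given $\gamma$, the edges in $\mathrm{Int}(\gamma)$ retain their unconditional Bernoulli law, while the edges in $A_{i,j}^{II}$ outside $\gamma$ are conditioned on the \emph{decreasing} event ``no open circuit in $A_{i,j}^{II}$ strictly outside $\gamma$''; by the FKG inequality this tilting only decreases probabilities of the increasing events $\{v\lra\partial A_{i,j}'\}$. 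Running the rectangle decomposition separately on each side of $\gamma$, the inner contribution is controlled by the unconditional estimate of the previous paragraph and the outer one by the FKG comparison. The additional increasing corridor conditions in $O_{i,j}^{s,t}$ are handled by the same FKG-monotonicity argument, using Theorem~\ref{lem:PiBounds}(ii)--(iv) to absorb the local inflation near the corridors by summing contributions at distance $k$ from $\gamma$ against $\pi(k)$ and invoking $\sum_{k\le t}\pi(k)\le C_3 t\pi(t)$. Combining the contributions yields the bound $C_{10}\,s\,t\,\pi(t)$ with an absolute constant $C_{10}$.
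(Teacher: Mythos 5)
Your treatment of the first inequality is correct (and in fact more explicit than the paper's one‑line remark). Your unconditional bound $\mathbb{E}[\tilde{\mathcal{C}}(A_{i,j}')]\le C\,st\,\pi(t)$ via a rectangle decomposition and Theorem~\ref{lem:PiBounds}(iv) is also fine, and you correctly identify the real difficulty: $\P(O_{i,j}^{s,t})$ need not be bounded below uniformly in $s/t$, so one cannot simply divide the unconditional bound by $\P(O_{i,j}^{s,t})$.

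The proposed upgrade to the conditional bound, however, has a genuine gap. After conditioning on $\gamma_{i,j}=\tilde\gamma$, the remaining conditioning coming from $O_{i,j}^{s,t}$ is a \emph{mixture} of a decreasing event (``$\tilde\gamma$ is widest'', i.e.\ no open circuit in $A_{i,j}^{II}$ outside $\tilde\gamma$) and \emph{increasing} events (the corridor connections from $\tilde\gamma$ to $\partial B_{i,j}$). FKG monotonicity handles the decreasing part, as you say, but for the increasing corridor conditions it points the \emph{wrong way}: conditioning on increasing events can only \emph{increase} the probability of the increasing events $\{v\lra\partial A_{i,j}'\}$, and hence $\mathbb{E}[\tilde{\mathcal{C}}(A_{i,j}')]$. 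Nothing in FKG gives an upper bound here. The phrase ``absorb the local inflation near the corridors'' points at the right intuition—the corridor conditioning is local—but you supply no mechanism for converting that intuition into a bound; Theorem~\ref{lem:PiBounds}(ii)–(iv) control unconditional sums of $\pi(k)$, not conditional probabilities.

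The paper resolves exactly this difficulty differently, and without ever conditioning on the circuit. It decomposes $A'$ into eight pieces $A_1',\dots,A_8'$ and, for each piece $A_l'$, exploits that $\{v\lra\partial A_l'\}$ depends only on $E(A_l')$ while $O_{0,0}^{s,t}(A'\setminus A_l')$ depends only on $E(A'\setminus A_l')$, yielding the factorization
$\P(v\lra\partial A_l';\,O_{0,0}^{s,t})\le \P(v\lra\partial A_l')\,\P\bigl(O_{0,0}^{s,t}(A'\setminus A_l')\bigr)$.
The crucial input is then a \emph{surgery} estimate, $\P\bigl(O_{0,0}^{s,t}(A'\setminus A_l')\bigr)\le C\,\P(O_{0,0}^{s,t})$, obtained by showing that a bounded collection of RSW crossings inside $A_l'$ upgrades $O_{0,0}^{s,t}(A'\setminus A_l')$ to $O_{0,0}^{s,t}$ (with FKG used only to \emph{lower}-bound the probability of the added crossings). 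This is the opposite use of RSW/FKG from what your argument needs, and it is precisely the ingredient missing from your proposal. If you want to salvage the circuit-conditioning route you would still need some decoupling/surgery estimate of this type to control the effect of the increasing corridor conditions on $\P(v\lra\partial A_{i,j}')$; FKG alone cannot do it.
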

\begin{proof}
The first inequality follows immediately from the obvious fact that,
on the event $O_{i,j}^{s,t}$, $\mathcal{C}_{i,j}(A_{i,j}')$
is smaller than or equal to $\tilde{\mathcal{C}}(A_{i,j}')$. We prove the second inequality.
Without loss of generality we take $i=j=0$.
\begin{figure}
 \centering
 \scalebox{0.3}{\includegraphics{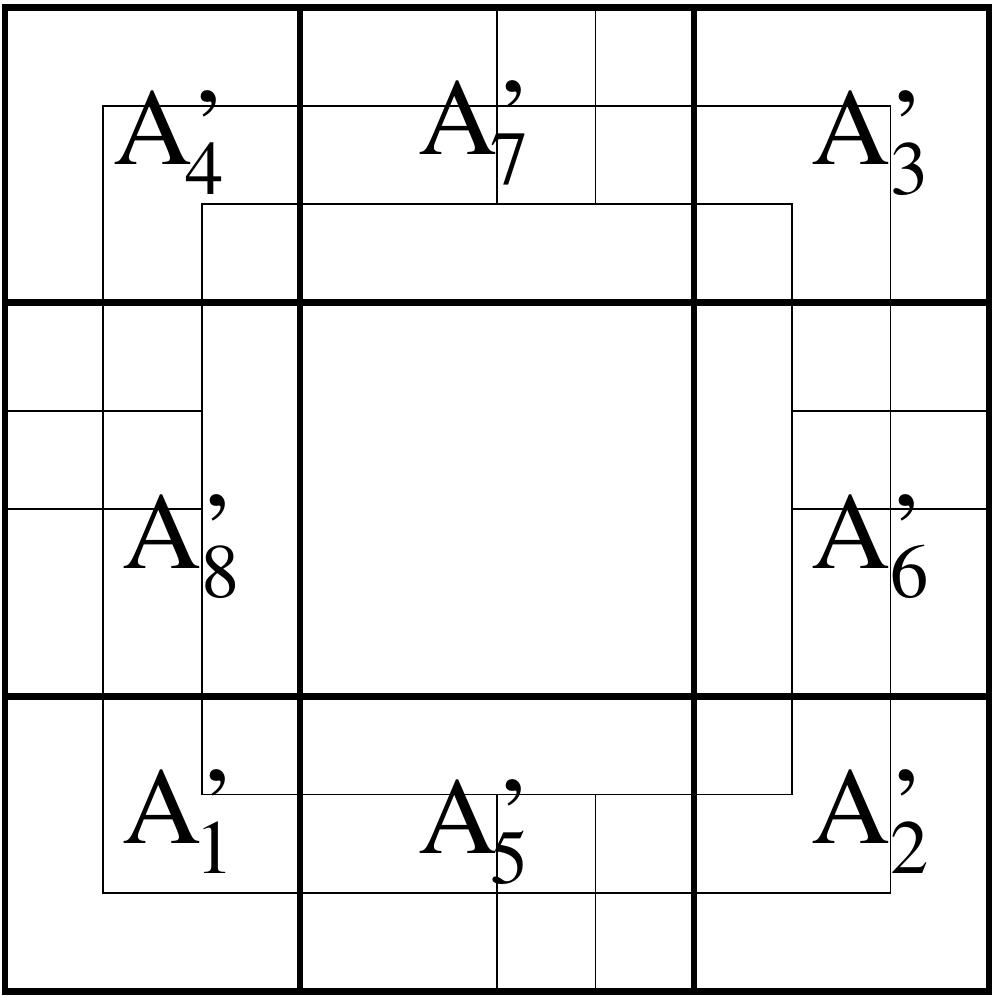}}
 \caption{The subdivision of $A_{0,0}'$ in $A_{1}', \cdots, A_{8}'$.}
 \label{fig:PartAPri}
\end{figure}
We subdivide the annulus
$A' = A_{0,0}' = \bigcup_{l=1}^{8} A_{l}'$, where $A_{1}', A_{2}', A_{3}', A_{4}'$ are the $(3t \times 3t)$-squares in
the four corners,
and $A_{5}', A_{6}', A_{7}', A_{8}'$ the remaining rectangles (see Figure \ref{fig:PartAPri}).
Note that
$\tilde{\mathcal{C}}(A') \le \sum_{l=1}^{8} \tilde{\mathcal{C}}(A_{l}').$
Hence it is sufficient to show that there is a constant $C_{11}$ such that for each $l =1, \cdots,8$,
\begin{equation} \label{eq-r-tCbd}
\E[\tilde{\mathcal{C}}(A_{l}') | O_{0,0}^{s,t}] \leq C_{11} s t \pi(t).
\end{equation}
By symmetry we only have to handle the cases $l=1$ and $l=5$.
For each $l$ the l.h.s. of \eqref{eq-r-tCbd} is
\begin{equation} \label{eq-r-help1}
\mathbb{E}[\tilde{\mathcal{C}}(A_{l}') | O_{0,0}^{s,t}] = \frac{1}{\prob(O_{0,0}^{s,t})}\sum_{v \in A_{l}'}
\prob(v \lra \partial A_{l}'; O_{0,0}^{s,t}).
\end{equation}
Recall the notation \eqref{eq:notationRestrOfIncrEvent}. For each $v \in A_{1}'$, obviously,
\begin{equation} \label{eq-r-help2}
 \prob(v \lra \partial A_{1}'; O_{0,0}^{s,t}) \le
\prob(v \lra \partial A'_1) \prob(O_{0,0}^{s,t}(A' \setminus A_{1}')).
\end{equation}
Further, informally speaking, the event $O_{0,0}^{s,t}(A' \setminus A_{1}')$ can, with a `local surgery
involving a bounded cost in terms of probability', be turned into the event $O_{0,0}^{s,t}$.
More precisely, if $O_{0,0}^{s,t}(A' \setminus A_{1}')$ holds, and there is a horizontal open crossing
of the rectangle $[-s, -s + 6t] \times [-s + t, -s + 2t]$ and of the square $[-s, -s + 3 t] \times 
[-s + 3t, -s + 6 t]$, and a vertical open crossing of the rectangle
$[-s +t, -s + 2t] \times [-s, -s + 6t]$ and of the square $[-s + 3 t, -s + 6 t] \times [-s, -s + 3 t]$, then
the event $O_{0,0}^{s,t}$ holds. Hence, by RSW (and FKG) we have a positive constant $C_{12}$ such that
$\prob(O_{0,0}^{s,t}(A'\setminus A_{1}')) \le C_{12} \prob(O_{0,0}^{s,t})$.
Combining this with \eqref{eq-r-help1} and \eqref{eq-r-help2} gives
\begin{equation}\label{eq-r-case1}
\mathbb{E}[\tilde{\mathcal{C}}(A_{1}') | O_{0,0}^{s,t}]  \le C_{12} \sum_{v \in A'_1} \prob(v \lra \partial A'_1).
\end{equation}
For the case $l=5$ a similar argument (now the `surgery' can be done on the union of the region $V_{0,-1}$ and the rectangle
$[v_{1} - t, v_{1} + t]\times [-s,-s+3t]$)) gives a constant $C_{13}> 0$ such that
\begin{equation}\label{eq-r-case5}
\mathbb{E}[\tilde{\mathcal{C}}(A_{5}') | O_{0,0}^{s,t}]  \le 
C_{13} \sum_{v \in A'_5}  \prob(v \lra \partial A'_5).
\end{equation}
Application of part (iv) of Theorem \ref{lem:PiBounds} to the right-hand sides of \eqref{eq-r-case1} and
\eqref{eq-r-case5} gives \eqref{eq-r-tCbd}. \qed
\end{proof}

\subsection{Properties of nice circuits}
Let $m,s,t$ be as in Definition \ref{def:O}, and recall the Remark about the values of the indices
$i, j$ at the end of Section \ref{sec:proofDefi}. Let, for each $i, j$, $\ga_{i,j}$ be as in the beginning
of Section \ref{sect-czann},
and let $\tilde{\ga}_{i,j}$ be a deterministic circuit in the annulus $A_{i,j}^{II}$.
Further we will denote the collection of all $\ga_{i,j}$'s by $(\ga)$, and the collection
of al $\tilde{\gamma}_{i,j}$'s by $(\tilde \gamma)$.
\begin{definition}\label{def:nice}
 We say that $\tilde{\gamma}_{i,j}$ is \emph{$(s,t)$-nice} if
\begin{equation}\label{eq:defGammaij}
 \mathbb{E}[\tilde{\mathcal{C}}(A_{i,j}')\: | \; O_{i,j}^{s,t}; \gamma_{i,j} = \tilde{\gamma}_{i,j}] \le 2 C_{10} s t \pi(t),
\end{equation}
with $C_{10}$ as in Lemma \ref{lem:ExpSizeEpsxX}.
Further, the collection $(\tilde{\gamma})$ is called $(m,s,t)$-nice if 
each circuit in the collection is $(s,t)$-nice.
\end{definition}
We define $\Gamma_{i,j}^{s,t}$
as the event that $\ga_{i,j}$ is $(s,t)$-nice, and define

\begin{equation}\label{eq:defGammaNxEps}
 \Gamma^{m,s,t} = \bigcap_{i,j} \Gamma_{i,j}^{s,t}.
\end{equation}
Recall the notation \eqref{eq:defCij}. We will study the open cluster $\mathcal{C}_{0,0}(Q)$, where
\begin{equation*}
 Q = (\Lambda_{n} \setminus \Lambda_{ms}) \cup (\bigcup_{i,j} A_{i,j}').
\end{equation*}
\begin{lemma}\label{lem:probGamma}
There exist positive constants $C_{14}$ and $C_{15}$ such that, for all $m, s$ and $t$,
\begin{enumerate}[(i)]
\item 
\begin{equation*}
 \prob(\Gamma^{m,s,t} | O^{m,s,t}) \ge (C_{14})^{m^{2}}.
\end{equation*}
\item For all $(m,s,t)$-nice $(\tilde{\gamma})$ and all $n$ with $n - ms \le t$,
\begin{equation*}
 \prob\left(\mathcal{C}_{0,0}(Q) \le C_{15} m^{2} st \pi(t)\; | \; O^{m,s,t}; (\gamma) = (\tilde{\gamma}) \right)
\ge \frac{1}{2}.
\end{equation*}
\end{enumerate}
\end{lemma}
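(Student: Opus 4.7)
The plan is to apply Markov's inequality twice, exploiting the near-independence of the events $O_{i,j}^{s,t}$, $\Gamma_{i,j}^{s,t}$, and the cluster sizes $\tilde{\mathcal{C}}(A_{i,j}')$ across the different boxes $B_{i,j}$. The key observation is that all of these quantities are essentially determined by edges in $E(B_{i,j})$, and more sharply by edges in $A_{i,j}^{II}$ and $A_{i,j}'$, which sit strictly inside $B_{i,j}$; so the $m^2$ boxes decouple up to controllable boundary effects.

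For part (i), I would start from Lemma \ref{lem:ExpSizeEpsxX}, which by the tower property reads
\begin{equation*}
\E\bigl[\,\E[\tilde{\mathcal{C}}(A_{i,j}') \mid O_{i,j}^{s,t}, \gamma_{i,j}]\, \bigm|\, O_{i,j}^{s,t}\bigr] \;\le\; C_{10}\, s t \pi(t).
\end{equation*}
Markov's inequality applied to the nonnegative random variable on the left yields $\prob(\Gamma_{i,j}^{s,t} \mid O_{i,j}^{s,t}) \geq 1/2$. Since $\Gamma_{i,j}^{s,t}$ depends only on $\gamma_{i,j}$, hence on edges in the (pairwise disjoint) annuli $A_{i,j}^{II}$, and since $O^{m,s,t}$ factorises as a conjunction of per-box circuit events and per-corridor connection events living on mutually (essentially) disjoint edge sets, one obtains
\begin{equation*}
\prob(\Gamma^{m,s,t} \mid O^{m,s,t}) \;\ge\; \prod_{i,j} \prob(\Gamma_{i,j}^{s,t} \mid O_{i,j}^{s,t}) \;\ge\; (1/2)^{m^2},
\end{equation*}
so $C_{14} = 1/2$ works.

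For part (ii), I would first note the deterministic inequality (valid on $O^{m,s,t}$)
\begin{equation*}
\mathcal{C}_{0,0}(Q) \;\le\; \sum_{i,j} \tilde{\mathcal{C}}(A_{i,j}') \;+\; \tilde{\mathcal{C}}(\Lambda_n \setminus \Lambda_{ms}).
\end{equation*}
Indeed, for $(i,j)\ne (0,0)$ any $v\in A_{i,j}'$ with $v\lra\gamma_{0,0}$ has its connecting path leave $A_{i,j}'$, so $v\lra\partial A_{i,j}'$; for $(i,j)=(0,0)$ the corridor crossings guaranteed by $O_{0,0}^{s,t}$ connect $\gamma_{0,0}$ to $\partial A_{0,0}'$, giving the same conclusion (as in Lemma \ref{lem:ExpSizeEpsxX}); and any $v\in \Lambda_n\setminus\Lambda_{ms}$ connected to $\gamma_{0,0}$ must cross the inner boundary. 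Using niceness together with box independence to identify
\begin{equation*}
\E[\tilde{\mathcal{C}}(A_{i,j}') \mid O^{m,s,t}, (\gamma)=(\tilde\gamma)] \;=\; \E[\tilde{\mathcal{C}}(A_{i,j}') \mid O_{i,j}^{s,t}, \gamma_{i,j}=\tilde\gamma_{i,j}] \;\le\; 2 C_{10}\, st\pi(t),
\end{equation*}
the sum has conditional expectation at most $2 C_{10} m^2 st\pi(t)$. The boundary annulus has width $\le t$ and can be covered by four thin rectangles to which Theorem \ref{lem:PiBounds}(iv) applies, giving expectation $O(mst\pi(t))$; this contribution is unaffected by the conditioning (its edges are disjoint from $E(\Lambda_{ms})$) and is negligible compared to $m^2 st\pi(t)$. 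A final Markov inequality, with the bound doubled, gives the conclusion for a suitable $C_{15}$.

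The main obstacle is making the box-independence claims rigorous: adjacent boxes $B_{i,j}$ share boundary edges, and the corridors $H_{i,j}, V_{i,j}$ straddle pairs of boxes. The resolution is that $\Gamma_{i,j}^{s,t}$ and $\tilde{\mathcal{C}}(A_{i,j}')$ depend only on edges in $A_{i,j}'$, which lies in the interior of $B_{i,j}$, while $O^{m,s,t}$ cleanly factorises into circuit events (one per box, on disjoint $A_{i,j}^{II}$'s) and corridor events (one per pair, on disjoint $H_{i,j}$'s and $V_{i,j}$'s), so all relevant events decouple exactly.
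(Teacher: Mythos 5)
Your per-box Markov step ($\prob(\Gamma_{i,j}^{s,t}\mid O_{i,j}^{s,t})\ge 1/2$) is correct and matches the paper. The gap is in the bridge from that per-box bound to the global one. You assert that $O^{m,s,t}$ ``cleanly factorises into circuit events and corridor events living on mutually disjoint edge sets, so all relevant events decouple exactly,'' and from this you infer $\prob(\Gamma^{m,s,t}\mid O^{m,s,t})\ge \prod_{i,j}\prob(\Gamma_{i,j}^{s,t}\mid O_{i,j}^{s,t})$. This decoupling is false. The corridor event in $H_{i,j}$ demands an open connection between the \emph{widest} circuits $\gamma_{i,j}$ and $\gamma_{i+1,j}$; these are random objects determined by the configurations inside $A_{i,j}^{II}$ and $A_{i+1,j}^{II}$, so one corridor event is simultaneously coupled to the circuit events of two neighbouring boxes. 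The corridors are not even geometrically disjoint from the annuli: $H_{0,0}=[s-2t,s+2t]\times[0,t]$ intersects $A_{0,0}^{II}=\Lambda_{s-t}\setminus\Lambda_{s-2t}$, as it must in order to touch the circuit. Since the $\Gamma_{i,j}^{s,t}$ are not increasing events, you cannot repair this with FKG or positive association. The same unjustified exact decoupling underlies your part (ii), where you write $\E[\tilde{\mathcal{C}}(A_{i,j}')\mid O^{m,s,t},(\gamma)=(\tilde\gamma)] = \E[\tilde{\mathcal{C}}(A_{i,j}')\mid O_{i,j}^{s,t},\gamma_{i,j}=\tilde\gamma_{i,j}]$ as an equality.

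The paper confronts this dependence head-on and only obtains an \emph{approximate} factorisation with a multiplicative cost per box: it proves $\prob(O^{m,s,t};(\gamma)=(\tilde\gamma))\ge C_{16}^{m^2}\prod_{i,j}\prob(O_{i,j}^{s,t};\gamma_{i,j}=\tilde\gamma_{i,j})$. The mechanism is a local surgery: one inserts an auxiliary RSW-type gluing event $D_2$ that reconnects the interior structure of a box to the exterior structure, conditions on the configuration on a separating region $K$, and applies FKG to the three increasing events $D$, $D_1$, $D_2$, using that $D$ does not depend on the interior $G$, that $D_1$ does not depend on $K$, and that $\prob(D_2)\ge C_{16}$ by RSW. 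Iterating over boxes gives the $C_{16}^{m^2}$ loss, and combined with $\prob(O^{m,s,t})\le\prod_{i,j}\prob(O_{i,j}^{s,t})$ this yields part (i) with $C_{14}=C_{16}/2$, not $1/2$. For part (ii), the same surgery gives the one-sided inequality $\E[\tilde{\mathcal{C}}(A_{i,j}')\mid O^{m,s,t},(\gamma)=(\tilde\gamma)]\le C_{16}^{-1}\E[\tilde{\mathcal{C}}(A_{i,j}')\mid O_{i,j}^{s,t},\gamma_{i,j}=\tilde\gamma_{i,j}]$, which is all that is needed. This surgery argument is the essential content of Lemma~\ref{lem:probGamma}, and it is what your proposal omits.
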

\begin{proof} We claim that there is a constant $C_{16} > 0$ such that for all $i,j$:
\begin{equation}\label{eq:SplitOnxEpsWithGamma}
 \prob(O^{m,s,t}; (\gamma) = (\tilde{\gamma})) \ge C_{16}\prob(D)\prob(O_{i,j}^{s,t}; \gamma_{i,j} = \tilde{\gamma}_{i,j}),
\end{equation}
where (with the notation \eqref{eq:notationRestrOfIncrEvent})
\begin{equation*}
 D = O^{m,s,t}(\Lambda_{n} \setminus B_{i,j})\cap \bigcap_{\tilde{i},\tilde{j}:(\tilde{i},\tilde{j}) \neq (i,j)} \{ \gamma_{\tilde{i},\tilde{j}} = \tilde{\gamma}_{\tilde{i},\tilde{j}} \}.
\end{equation*}
To prove this claim we write
\begin{equation*}
 G = B_{i,j} \setminus A_{i,j}^{I},\qquad 
 J = A_{i,j}^{I} \cup \bigcup_{(\tilde{i},\tilde{j}) \in M_{i,j}} A_{\tilde{i},\tilde{j}}^{I},\qquad
 K = \Lambda_{n} \setminus (G \cup J),
\end{equation*}
where
$M_{i,j} =  \{(i-1,j),(i,j-1),(i+1,j),(i,j+1) \} \cap \{ -(m-1)/2, \cdots,(m-1)/2 \}^{2}$.
Let $D_{1} = O_{i,j}^{s,t} \cap \{ \gamma_{i,j} = \tilde{\gamma}_{i,j} \}$. We also need an event $D_2$ which, informally 
speaking, connects the structures in the definition of $D_1$ with those in $D$. More precisely,
$$D_{2} = \bigcap_{(\tilde{i},\tilde{j}) \in M_{i,j}} D_{2}^{\tilde{i},\tilde{j}},$$
where $D_{2}^{i+1,j}$ is the event that (i) $H_{i,j} \cap J$ contains a horizontal crossing and (ii) $H_{i,j} \cap A_{i,j}^{I}$
and $H_{i,j} \cap A_{i+1,j}^{I}$ both contain a vertical crossing. The other $D_{2}^{\tilde{i},\tilde{j}}$'s are 
defined similarly.
By RSW (and FKG) there is a positive constant $C_{16}$ such that $\prob(D_{2}) >  C_{16}$.
Note that (with the notation in Section \ref{sec:proofDefi}),
the event $D$ is increasing with respect to the edges outside $E(K)$, and that
$D_1$ is increasing with respect to the edges outside $E(G)$.
We get
\begin{eqnarray*}
 \lefteqn{\prob(O^{m,s,t}; (\gamma) = (\tilde{\gamma})) \ge \prob(D \cap D_{1} \cap D_{2})}\\
 & = & \sum_{\omega_{1} \in \{  0,1\}^{E(K)}} \prob(\omega_{1}) \sum_{\omega_{2} \in \{0,1\}^{E(G)}}
\prob(D \cap D_{1} \cap D_{2} | \omega_{1}, \omega_{2})\prob(\omega_{2})\\
 & \ge & \sum_{\omega_{1} \in \{  0,1\}^{E(K)}} \prob(\omega_{1}) \sum_{\omega_{2} \in \{0,1\}^{E(G)}} \prob(D | \omega_{1},\omega_{2})\prob(D_{1} | \omega_{1},\omega_{2})\prob(D_{2} | \omega_{1},\omega_{2})\prob(\omega_{2})\\
 & = & \sum_{\omega_{1} \in \{  0,1\}^{E(K)}} \prob(\omega_{1})\prob(D | \omega_{1})\sum_{\omega_{2} \in \{0,1\}^{E(G)}} \prob(D_{1} | \omega_{2})\prob(D_{2})\prob(\omega_{2})\\
 & \ge & C_{16} \prob(D)\prob(D_{1}),
\end{eqnarray*}
where we used FKG in the third line, and in the fourth line we used that
$D$ doesn't depend on the configuration on $G$,
$D_{1}$ doesn't depend on the configuration on $K$, and $D_{2}$ doesn't depend on the configuration on $G \cup K$.
This proves the claim.

By repeating the same arguments for each $B_{i,j}$, we eventually get the following `extension' of \eqref{eq:SplitOnxEpsWithGamma}: 


\begin{equation}\label{eq:SplitTotal}
 \prob\left((\gamma) = (\tilde{\gamma}) ; O^{m,s,t}\right) \ge
C_{16}^{m^{2}} \prod_{i,j} \prob(\gamma_{i,j} = \tilde{\gamma}_{i,j}; O_{i,j}^{s,t}).
\end{equation}

Now we are ready to prove part (i):
\begin{eqnarray}
 \prob(\Gamma^{m,s,t} | O^{m,s,t}) & = &
\frac{1}{\prob(O^{m,s,t})}\sum_{\tilde{\gamma}:(m,s,t)\textrm{-nice}} \prob\left((\gamma) = (\tilde{\gamma}) ; O^{m,s,t}\right)
\nonumber\\
& \ge & C_{16}^{m^{2}} \prod_{i,j} \prob(\Gamma_{i,j}^{s,t} | O_{i,j}^{s,t}),
\label{eq:theGammasAlmInd}
\end{eqnarray}
where the inequality follows from \eqref{eq:SplitTotal} and the obvious inequality
$\prob(O^{m,s,t}) \le \prod_{i,j} \prob(O_{i,j}^{s,t})$.
This gives part (i) of the lemma because
for each factor in the product of the last expression in \eqref{eq:theGammasAlmInd} we have,
by Definition \ref{def:nice}, Markov's inequality and Lemma \ref{lem:ExpSizeEpsxX},
\begin{eqnarray*}
 \prob(\Gamma_{i,j}^{s,t} | O_{i,j}^{s,t}) & = & \prob\left(\mathbb{E}[\tilde{\mathcal{C}}(A_{i,j}')\: | \; O_{i,j}^{s,t};
\gamma_{i,j} ] \le 2 C_{10} st \pi(t) \mid O_{i,j}^{s,t}\right)\\
 & \ge & 1 - \frac{\mathbb{E}[\tilde{\mathcal{C}}(A_{i,j}')\: | \; O_{i,j}^{s,t}]}{ 2 C_{10} st \pi(t)} \ge \frac{1}{2}.
\end{eqnarray*}

To prove part (ii) first note that
\begin{eqnarray*}
 \mathbb{E}\left[\tilde{\mathcal{C}}(A_{i,j}') | O^{m,s,t}; (\gamma) = (\tilde{\gamma})\right] & = &
\frac{\sum_{v \in A_{i,j}'} \prob\left(v \lra \partial A_{i,j}'; O^{m,s,t}; (\gamma) = (\tilde{\gamma})\right)}
{\prob(O^{m,s,t}; (\gamma) = (\tilde{\gamma}))}\\
 & \le & \frac{\sum_{v \in A_{i,j}'} \prob(v \lra \partial A_{i,j}'; O_{i,j}^{s,t}; \gamma_{i,j} = \tilde{\gamma}_{i,j})\prob(D)}{C_{16}\prob(D)\prob(O_{i,j}^{s,t}; \gamma_{i,j} = \tilde{\gamma}_{i,j})},
\end{eqnarray*}
where we used \eqref{eq:SplitOnxEpsWithGamma} in the denominator.
Hence
\begin{equation}\label{eq:expCAOleExpCAOij}
 \mathbb{E}\left[\tilde{\mathcal{C}}(A_{i,j}') | O^{m,s,t}; (\gamma) = (\tilde{\gamma})\right] \le
\frac{1}{C_{16}}\mathbb{E}[\tilde{\mathcal{C}}(A_{i,j}') | O_{i,j}^{s,t}; \gamma_{i,j} = \tilde{\gamma}_{i,j}]
\le \frac{2 C_{10}}{C_{16}} s t \pi(t),
\end{equation}
where the last inequality is just the `niceness' property (Definition \ref{def:nice}) of $(\tilde \ga)$.
%
To finish the proof of part (ii), note that, for each $K >0$,
\begin{eqnarray*} 
 \lefteqn{\prob\left(\mathcal{C}_{0,0}(Q) \le K m^2 s t \pi(t) | O^{m,s,t}; (\gamma) = (\tilde{\gamma})\right) \ge
1 - \frac{\mathbb{E}\left[\mathcal{C}_{0,0}(Q) | O^{m,s,t}; (\gamma) = (\tilde{\gamma})\right]}{K m^2 s t \pi(t)}}\\ 
 & \ge & 1 - \frac{\mathbb{E}\left[\tilde{\mathcal{C}}(\Lambda_{n}\setminus \Lambda_{ms})\right] +
\sum_{i,j} \mathbb{E}\left[\tilde{\mathcal{C}}(A_{i,j}') | O^{m,s,t}; (\gamma) = (\tilde{\gamma})\right]}{K m^2 s t \pi(t)}.
\end{eqnarray*}
Applying part (iv) of Theorem \ref{lem:PiBounds} to the first expectation in the r.h.s. of the last expression,
 and \eqref{eq:expCAOleExpCAOij} to each of the other expectations gives, by choosing $K$ sufficiently
large, the desired result. This completes the proof of part (ii) of Lemma \ref{lem:probGamma}.\qed
\end{proof}

\subsection{Cluster-size contributions inside the circuits}
In this section we write the value $t$ (the width of the relevant annuli and `corridors' in the construction) 
as $\lfloor \varepsilon s \rfloor$. A suitable value for $\varepsilon$ (depending on the values of $a$ and $b$ in
the statement of Theorem \ref{thm:maxClusterInInt}) will be determined in the next section. The main result in the
current section concerns the contribution from the interior of a nice circuit to
the cluster of that circuit.
Recall the notation \eqref{eq:defCijGamma}.
\begin{lemma}\label{lem:sizeIntGam}
There exist constants $C_{17}, C_{18}, \delta_{2} > 0$, and for every $\varepsilon < \frac{1}{12}$ there exists
$\delta_{3}(\varepsilon) > 0$, such that for all $s \in \mathbb{N}$ and all
$(s,\lfloor \varepsilon s\rfloor)$-nice circuits $\tilde{\gamma}_{0,0}$ in $A_{0,0}^{II}$,
\begin{enumerate}[(i)]
 \item \begin{equation} \label{eq-r-leminside-i}
 \prob\left(\mathcal{C}^{\tilde{\gamma}_{0,0}} \in
(C_{17}s^2 \pi(s), C_{18}s^2 \pi(s))\; | \; \gamma_{0,0} = \tilde{\gamma}_{0,0}\right) \ge \delta_{2}.
\end{equation}
\item \begin{equation} \label{eq-r-leminside-ii}
\prob\left(\mathcal{C}^{\tilde{\gamma}_{0,0}} <
4C_{10} s\lfloor \varepsilon s\rfloor \pi(\lfloor \varepsilon s\rfloor)\; | \;  \gamma_{0,0} = \tilde{\gamma}_{0,0}\right) \ge \delta_{3}(\varepsilon).
\end{equation}
\end{enumerate}
\end{lemma}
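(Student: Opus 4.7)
The proof separates into parts (ii) and (i); I would do (ii) first as it is the simpler of the two.

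For part (ii), the strategy is to force a closed dual circuit just inside $\tilde{\gamma}_{0,0}$, which confines $\mathcal{C}^{\tilde{\gamma}_{0,0}}$ to a narrow annular region whose expected size is controlled via part (iv) of Theorem~\ref{lem:PiBounds}. Let $E$ denote the event that $A_{0,0}^{III}$ contains a closed dual circuit. Since $E$ depends only on edges in $A_{0,0}^{III}$, which are disjoint from the edges in $A_{0,0}^{II}$ on which the conditioning $\gamma_{0,0}=\tilde{\gamma}_{0,0}$ is concentrated, the two events are independent, and RSW in an annulus of aspect ratio $\asymp 1/\varepsilon$ gives $\prob(E)\ge\delta_{0}(\varepsilon)>0$. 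On $E\cap\{\gamma_{0,0}=\tilde{\gamma}_{0,0}\}$, no vertex in $\Lambda_{s-3t}$ can be open-connected to $\tilde{\gamma}_{0,0}$, so every vertex counted by $\mathcal{C}^{\tilde{\gamma}_{0,0}}$ lies in the annular region $R:=\Lambda_{s-t}\setminus\Lambda_{s-3t}$, of width $2t$. Decomposing $R$ into four rectangles of dimensions at most $O(s)\times O(t)$ and applying part (iv) of Theorem~\ref{lem:PiBounds} yields $\E[\mathcal{C}^{\tilde{\gamma}_{0,0}}\mid E,\gamma_{0,0}=\tilde{\gamma}_{0,0}]=O(st\pi(t))$; the conditioning changes this by only a constant factor depending on $\varepsilon$. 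Markov's inequality then gives~\eqref{eq-r-leminside-ii}.

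For part (i), I would prove the upper and lower bounds separately. For the upper bound, any open path from $w\in\mathrm{Int}(\tilde{\gamma}_{0,0})$ to $\tilde{\gamma}_{0,0}$, cut at its first visit to $\tilde{\gamma}_{0,0}$, lies entirely in $\Lambda_{s-t}$; together with the fact that the forced-open circuit $\tilde{\gamma}_{0,0}$ connects all its vertices inside $\Lambda_{s-t}$, this gives $\mathcal{C}^{\tilde{\gamma}_{0,0}}\le\mathcal{M}_{s-t}\le\mathcal{M}_{s}$. Theorem~\ref{thm:maxClustCanBeSmall} then gives $\mathcal{M}_{s}\le C_{18}\,s^{2}\pi(s)$ with positive probability (with a bounded-factor adjustment to handle the conditioning). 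For the lower bound, note that $\varepsilon<1/12$ forces $t\le s/12$, so $\tilde{\gamma}_{0,0}$ surrounds $\Lambda_{\lfloor s/2\rfloor}$. I would define $Y':=\sum_{v\in\Lambda_{\lfloor s/2\rfloor}}\mathbf{1}\{v\lra\tilde{\gamma}_{0,0}\}$ and imitate the proof of Theorem~\ref{lem:momentBoundExpConnBoundary}: an RSW first-moment estimate (using the macroscopic open target $\tilde{\gamma}_{0,0}$) gives $\E[Y'\mid\gamma_{0,0}=\tilde{\gamma}_{0,0}]\ge c\,s^{2}\pi(s)$, while a second-moment estimate along the lines of Lemma~6.1 of~\cite{BCKS99} gives $\E[(Y')^{2}\mid\gamma_{0,0}=\tilde{\gamma}_{0,0}]\le C\,(s^{2}\pi(s))^{2}$. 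One-sided Chebyshev applied to $Y'$, combined with the deterministic inequality $Y'\le\mathcal{C}^{\tilde{\gamma}_{0,0}}$, then yields the desired lower bound.

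The main obstacle is handling the conditioning on $\gamma_{0,0}=\tilde{\gamma}_{0,0}$, which is not a monotone event: it forces some edges in $A_{0,0}^{II}$ open (those of $\tilde{\gamma}_{0,0}$) and effectively closes others (to rule out any wider open circuit). The workable observation is that this event is measurable with respect to the edges of $A_{0,0}^{II}$ only, so any event or random variable depending exclusively on edges in $A_{0,0}^{III}$ or deeper in $\Lambda_{s-2t}$ is independent of it, and RSW-type surgery can bridge the two regions at bounded cost. The most delicate piece is the second-moment bound for $Y'$ in the lower bound of part (i): one must redo the BCKS-type computation with $\tilde{\gamma}_{0,0}$ replacing the round boundary $\partial\Lambda_{2m}$, and handle the "gluing" of paths from $\Lambda_{\lfloor s/2\rfloor}$ through $A_{0,0}^{II}$ onto $\tilde{\gamma}_{0,0}$ via RSW.
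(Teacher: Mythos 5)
Your proposal correctly identifies the closed dual circuit in $A_{0,0}^{III}$ as the key device for part~(ii), and your observation that $\{\gamma_{0,0}=\tilde\gamma_{0,0}\}$ depends only on $E(A_{0,0}^{II})$ is the right structural fact. However, there is a genuine gap that runs through both parts: \emph{you never use the hypothesis that $\tilde\gamma_{0,0}$ is $(s,t)$-nice}, yet this hypothesis is exactly what makes the conditional expectation bounds true. In part~(ii) you claim $\E[\mathcal{C}^{\tilde\gamma_{0,0}}\mid E,\gamma_{0,0}=\tilde\gamma_{0,0}]=O(st\pi(t))$ by applying Theorem~\ref{lem:PiBounds}(iv) to $R=\Lambda_{s-t}\setminus\Lambda_{s-3t}$ and asserting that ``the conditioning changes this by only a constant factor.'' But $R\supset A_{0,0}^{II}$, so $\tilde{\mathcal{C}}(R)$ depends directly on the edges on which the conditioning acts, and for an atypical $\tilde\gamma_{0,0}$ the conditional expectation can be much larger than the unconditional one — Lemma~\ref{lem:ExpSizeEpsxX} controls this only \emph{on average} over circuits, and the niceness definition (Definition~\ref{def:nice}) is precisely the Markov-type restriction to the circuits for which the bound survives. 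The paper's proof of~(ii) conditions further on the innermost dual circuit $\beta_{0,0}=\tilde\beta$, bounds $\mathcal{C}^{\tilde\gamma_{0,0}}\le\tilde{\mathcal{C}}(A_{0,0}')$, and then invokes the niceness inequality together with Markov to get probability at least $1/2$; this is the step your argument is missing.

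For part~(i) your upper bound also has a structural problem independent of the conditioning issue. You bound $\mathcal{C}^{\tilde\gamma_{0,0}}\le\mathcal{M}_s$ and invoke Theorem~\ref{thm:maxClustCanBeSmall}, which only asserts that $\P(\mathcal{M}_s < K s^2\pi(s))$ is bounded \emph{below} by some (possibly small) positive constant. To intersect the events $\{\mathcal{C}^{\tilde\gamma_{0,0}}>C_{17}s^2\pi(s)\}$ (increasing) and $\{\mathcal{C}^{\tilde\gamma_{0,0}}<C_{18}s^2\pi(s)\}$ (decreasing) you cannot use FKG, so you need one of them to hold with probability close to $1$; a ``positive probability'' upper bound does not suffice. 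The paper achieves the high-probability upper bound by computing $\E[\mathcal{C}^{\tilde\gamma_{0,0}}\mid\gamma_{0,0}=\tilde\gamma_{0,0}]\le\E[\tilde{\mathcal{C}}(B_{0,0}')]+\E[\tilde{\mathcal{C}}(A_{0,0}')\mid O_{0,0}^{s,t};\gamma_{0,0}=\tilde\gamma_{0,0}]$, controlling the first term via Theorem~\ref{lem:PiBounds}(iv) and the second via niceness, and then applying Markov with $C_{18}$ large. Your lower-bound route (a conditional second-moment argument for $Y'$, imitating \cite{BCKS99}) is a legitimate but substantially heavier alternative to the paper's one-line reduction to $\hat Y=|\{v\in B_{0,0}': v\lra\partial B_{0,0}\}|$ and Theorem~\ref{lem:momentBoundExpConnBoundary}; the ``gluing through $A_{0,0}^{II}$'' in the second-moment computation that you flag as delicate is avoided entirely by the paper's choice of $\hat Y$.
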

\begin{proof}
Let $B_{0,0}'=B_{0,0} \setminus A_{0,0}'$.
Let $\hat{Y} = |\{ v \in B_{0,0}': v \lra \partial B_{0,0}\}|$.
Clearly,
\begin{equation}\label{eq:compWithHatY}
 \prob(\mathcal{C}^{\tilde{\gamma}_{0,0}} \ge C_{17}s^2 \pi(s) \; | \; \gamma_{0,0} = \tilde{\gamma}_{0,0})
\ge \prob(\hat{Y} \ge C_{17}s^2 \pi(s)),
\end{equation}
which (for a suitable choice of $C_{17}$)  by Theorem \ref{lem:momentBoundExpConnBoundary}
is at least a positive constant, which we write as $2 \delta_{2}$.
To complete the proof we need to find a $C_{18}>0$ such that
\begin{equation}\label{eq:probLargeClusIsSmall}
 \prob(\mathcal{C}^{\tilde{\gamma}_{0,0}} \ge C_{18}s^2 \pi(s) \; | \; \gamma_{0,0} = \tilde{\gamma}_{0,0}) \le \delta_{2}.
\end{equation}
To do this we look for an upper bound for
$\mathbb{E}[\mathcal{C}^{\tilde{\gamma}_{0,0}} \; | \; \gamma_{0,0} = \tilde{\gamma}_{0,0}]$.
We have
\begin{eqnarray} \label{eq-r-ub}
 \mathbb{E}[\mathcal{C}^{\tilde{\gamma}_{0,0}} \; | \; \gamma_{0,0} =
\tilde{\gamma}_{0,0}] & = & \mathbb{E}[\mathcal{C}^{\tilde{\gamma}_{0,0}} \; |
\; O_{0,0}^{s,\lfloor \varepsilon s\rfloor}; \gamma_{0,0} = \tilde{\gamma}_{0,0}] \\
 & \le & \mathbb{E}[\tilde{\mathcal{C}}(B_{0,0}')]
 + \mathbb{E}[\tilde{\mathcal{C}}(A_{0,0}')\; | \; O_{0,0}^{s,\lfloor \varepsilon s\rfloor};
\gamma_{0,0} = \tilde{\gamma}_{0,0}] \nonumber
\end{eqnarray}
Applying part (iv) of Theorem \ref{lem:PiBounds} to the first expectation in the last line, and the niceness property of
$\tilde{\gamma}_{0,0}$ to the other expectation, shows that
the l.h.s. of \eqref{eq-r-ub}
is at most $C_{19} s^2 \pi(s)$. Finally, Markov's inequality
gives \eqref{eq:probLargeClusIsSmall} with $C_{18} = \frac{C_{19}}{\delta_{2}}$.
This completes the proof of part (i).

Now we prove part (ii). Let $G$ be the event that there is a closed dual circuit in $A_{0,0}^{III}$.
On this event, let $\beta_{0,0}$ denote the innermost of such circuits. Observe that, conditioned on $\be_{0,0}$,
the configuration outside $\beta_{0,0}$
is independent of the configuration inside. Also observe that, on the event $G$, all vertices in the interior
of $\ga_{0,0}$ that are connected to $\ga_{0,0}$ are in $A'_{0,0}$.
By these and related simple observations we have that
\begin{eqnarray*} 
 \lefteqn{\prob\left(\mathcal{C}^{\tilde{\gamma}_{0,0}} < 4C_{10} s\lfloor \varepsilon s\rfloor \pi(\lfloor \varepsilon s\rfloor)\; |
\;  \gamma_{0,0} = \tilde{\gamma}_{0,0}; \beta_{0,0} = \tilde{\beta}\right)}\\
& = &  \prob\left(\mathcal{C}^{\tilde{\gamma}_{0,0}} <
4C_{10} s\lfloor \varepsilon s\rfloor \pi(\lfloor \varepsilon s\rfloor)\; |
\;  O_{0,0}^{s,\lfloor \varepsilon s\rfloor}; \gamma_{0,0} = \tilde{\gamma}_{0,0}; \beta_{0,0} = \tilde{\beta}\right) \\
 & \ge & \prob\left(\tilde{\mathcal{C}}(A_{0,0}') < 4C_{10} s\lfloor \varepsilon s\rfloor \pi(\lfloor \varepsilon s\rfloor)\; |
\;  O_{0,0}^{s,\lfloor \varepsilon s\rfloor}; \gamma_{0,0} = \tilde{\gamma}_{0,0}\right),
\end{eqnarray*}
which, by Markov's inequality and because $\tilde\ga_{0,0}$ is nice, is at least $1/2$.
Hence, the l.h.s. of \eqref{eq-r-leminside-ii} is at least $(1/2) \prob(G)$, which by RSW is larger than
some positive constant which depends only on $\varepsilon$.

\qed
\end{proof}

\subsection{Completion of the proof of Theorem \ref{thm:maxClusterInInt}} \label{sect-compl}
We are now ready to prove Theorem \ref{thm:maxClusterInInt}. First we still restrict to the case $p = 1/2$.
Let $0 < a < b$ be given.
See the brief outline in Section \ref{sec:proofDefi}.
The lengths of the building blocks $B_{i,j}$ and the widths of the annuli and `corridors'
in the partition of $\La_n$, will be taken proportional to $n$, say (roughly) $x n$ and $\ep x n$ respectively,
with suitably chosen $x$ and $\ep$. For this purpose we will use the following lemma:

\begin{lemma}\label{lem:chooseNxEps}
There exist $x>0, \varepsilon \in (0,\frac{1}{12})$ and
$N \in \mathbb{N}$,
with $\frac{1}{x}$ an odd integer, such that for all $n \ge N$ the following inequalities hold:
\begin{eqnarray}
 C_{17}\xn^2 \pi(\xn)\cdot (\frac{1}{x})^{2} & \ge & an^{2}\pi(n); \label{eq:xCondI}\\
 C_{18}\xn^{2} \pi(\xn) & \le & \frac{1}{3}(b-a)n^{2}\pi(n); \label{eq:xCondII} \\
 (4C_{10} \vee C_{15}) (\frac{1}{x})^{2} \exn\xn\pi(\exn) & \le & \frac{1}{3}(b-a)n^{2}\pi(n).\label{eq:EpsCond}
\end{eqnarray}
\end{lemma}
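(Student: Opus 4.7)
The plan is to exploit the two-sided bound of Theorem \ref{lem:PiBounds}(i), treat the three inequalities in the order (2), (1), (3), and handle the floor functions by choosing $N$ large at the end. The crucial structural point (the ``sublinearity of $1/\pi(n)$'' the authors mentioned in the abstract) is that the upper bound in Theorem \ref{lem:PiBounds}(i) has exponent $1/2$, strictly less than $1$, so that $x^2\, \pi(xn)/\pi(n)$ can be made arbitrarily small by taking $x$ small; if the exponent were $\ge 1$, only the boundedness of this quantity would be available, and \eqref{eq:xCondII} could fail when $C_{18}$ happens to exceed $\tfrac{1}{3}(b-a)$.

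First I would dispose of \eqref{eq:xCondII}. Take $n$ so large that $\lfloor xn\rfloor \ge 1$; then $\lfloor xn\rfloor \le xn$, and, by the upper bound in Theorem \ref{lem:PiBounds}(i),
\begin{equation*}
\lfloor xn\rfloor^{2}\pi(\lfloor xn\rfloor)
 \le x^{2} n^{2} \cdot C_{2}\bigl(n/\lfloor xn\rfloor\bigr)^{1/2} \pi(n)
 \le C' \, x^{3/2}\, n^{2}\pi(n),
\end{equation*}
for an absolute constant $C'$ (absorbing a factor coming from $n/\lfloor xn\rfloor\le 2/x$ for $n$ large). Thus \eqref{eq:xCondII} holds once $x$ is small enough that $C_{18}C' x^{3/2}\le \tfrac{1}{3}(b-a)$. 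Next, for \eqref{eq:xCondI}, since $\lfloor xn\rfloor/x \ge n-1/x$, one has $(\lfloor xn\rfloor/x)^{2}\ge n^{2}/2$ for $n$ large, and the lower bound in Theorem \ref{lem:PiBounds}(i) gives $\pi(\lfloor xn\rfloor)\ge C_{1}(n/\lfloor xn\rfloor)^{\alpha}\pi(n)\ge C_{1}\pi(n)$; together these yield \eqref{eq:xCondI} provided $C_{17}C_{1}/2 \ge a$, which again is automatic once $x$ is small (and in fact follows already from monotonicity of $\pi$, since $\pi(\lfloor xn\rfloor)\ge \pi(n)$, if $C_{17}\ge 2a$; otherwise refine with $(1/x)^{\alpha}$). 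So I fix $x$ small enough for both, and, since any sufficiently small positive number can be approximated by a reciprocal of a large odd integer, I take $x=1/k$ for an odd integer $k$ large enough.

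With $x$ now frozen, I pick $\varepsilon$. For \eqref{eq:EpsCond}, an analogous estimate using $\lfloor\varepsilon\lfloor xn\rfloor\rfloor\le \varepsilon x n$ and Theorem \ref{lem:PiBounds}(i) gives
\begin{equation*}
(1/x)^{2}\,\lfloor\varepsilon\lfloor xn\rfloor\rfloor\,\lfloor xn\rfloor\,\pi\bigl(\lfloor\varepsilon\lfloor xn\rfloor\rfloor\bigr)
\le (1/x)^{2}\,\varepsilon x n\cdot x n\cdot C_{2}\bigl(n/\lfloor\varepsilon\lfloor xn\rfloor\rfloor\bigr)^{1/2}\pi(n)
\le C''\,\varepsilon^{1/2} x^{-1/2}\, n^{2}\pi(n),
\end{equation*}
for an absolute constant $C''$, valid for $n$ large. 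With $x$ already fixed, the right-hand side tends to $0$ as $\varepsilon\to 0$, so choosing $\varepsilon\in(0,1/12)$ small enough makes the coefficient $(4C_{10}\vee C_{15})C''\,\varepsilon^{1/2}x^{-1/2}$ at most $\tfrac{1}{3}(b-a)$.

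Finally, I select $N$ large enough that all the approximations of $\lfloor\cdot\rfloor$ by the exact products used above are valid simultaneously (there are finitely many of them, all cosmetic corrections of $O(1/(xn))$ and $O(1/(\varepsilon xn))$ type). The only genuine obstacle is \eqref{eq:xCondII}: it is precisely here that the exponent $1/2<1$ in Theorem \ref{lem:PiBounds}(i) is used in an essential way to produce the decaying factor $x^{3/2}$, so the whole scheme would collapse without it.
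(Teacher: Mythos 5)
Your proof follows the same strategy as the paper's: choose $x$ small via Theorem~\ref{lem:PiBounds}(i) to handle \eqref{eq:xCondI} and \eqref{eq:xCondII}, then with $x$ frozen choose $\varepsilon$ small to handle \eqref{eq:EpsCond}, then enlarge $N$ to absorb the floor corrections. Your explicit estimates (including the ``refine with $(1/x)^\alpha$'' fallback for \eqref{eq:xCondI}) are correct.

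However, your closing diagnosis of where the exponent $1/2<1$ is used essentially is wrong, and in fact inverts the paper's own remark. Write $\beta$ for the exponent in the upper bound of Theorem~\ref{lem:PiBounds}(i) (here $\beta = 1/2$). For \eqref{eq:xCondII} the relevant ratio satisfies, for $n$ large,
\begin{equation*}
\frac{\xn^2\,\pi(\xn)}{n^2\pi(n)} \;\le\; C_2\, x^2 \Bigl(\frac{n}{\xn}\Bigr)^{\beta} \;\lesssim\; x^{\,2-\beta},
\end{equation*}
which tends to $0$ as $x\to 0$ for \emph{any} $\beta < 2$. So $\beta \ge 1$ poses no difficulty for \eqref{eq:xCondII}, contrary to your claim that ``only boundedness would be available''; your decaying factor $x^{3/2}$ would simply become $x^{2-\beta}$ and still decay. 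The genuinely critical place is \eqref{eq:EpsCond}, where $x$ is already fixed and only $\varepsilon$ may be sent to $0$:
\begin{equation*}
\frac{(1/x)^2\, \exn\, \xn\, \pi(\exn)}{n^2\pi(n)} \;\lesssim\; \frac{(1/x)^2\cdot \varepsilon x n\cdot x n\cdot (\varepsilon x)^{-\beta}}{n^2} \;=\; \varepsilon^{\,1-\beta}\, x^{-\beta},
\end{equation*}
which tends to $0$ as $\varepsilon\to 0$ (with $x$ fixed) if and only if $\beta<1$. Your own computation produced exactly this factor, $\varepsilon^{1/2}=\varepsilon^{1-\beta}$; it is the strict positivity of the exponent $1-\beta$ there, not the factor $x^{3/2}$ in \eqref{eq:xCondII}, that requires $\beta<1$. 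This is precisely what the paper highlights in the italicized note at the end of the proof of Lemma~\ref{lem:chooseNxEps}.
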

\begin{proof}
It is easy to see (a weak form of the lower bound in part (i) of Theorem \ref{lem:PiBounds} suffices)
that if $x$ is sufficiently
(depending on $a$) small, then
\eqref{eq:xCondI} holds for all sufficiently large $n$.
It also  easily follows (now from the upper bound in the same Theorem) that if $x$ is sufficiently (depending on
$b-a$) small, \eqref{eq:xCondII} holds for all sufficiently large $n$.
Finally, for $x$ fixed, it follows (again from the upper bound in part (i) of Theorem \ref{lem:PiBounds}) that
if $\ep$ is sufficiently (depending on $b-a$ and $x$) small, then \eqref{eq:EpsCond} holds for all sufficiently large $n$. \\
{\em Note that for this last step it is essential that the exponent ($1/2$) in part (i) of
Theorem \ref{lem:PiBounds} is strictly smaller than $1$.} \\
This completes the proof of Lemma \ref{lem:chooseNxEps}. \qed
\end{proof}

\medskip
Now let $x$, $\ep$ and $N$ be as in Lemma \ref{lem:chooseNxEps}. Moreover we assume
(which we may, because we can enlarge $N$ if necessary) that
$n - \frac{1}{x} \lfloor x n \rfloor \leq \lfloor \ep \lfloor x n \rfloor \rfloor$ for all $n \ge N$.
Denote by $D_{n}$ the event
\begin{equation*}
 D_{n} = \{ \exists v \in \Lambda_{n}: \mathcal{C}_{n}(v) \in (an^2 \pi(n), bn^2 \pi(n)) \}.
\end{equation*}
Let $n \ge N$ and let $m = \frac{1}{x}, s = \xn$ and $t = \exn$.
By straightforward RSW and FKG arguments, there is a $\delta_{4}(x,\varepsilon)>0$ such that
$\prob(O^{m,s,t}) > \delta_{4}(x,\varepsilon)$.
Hence
\begin{equation}\label{eq:r-end1}
\prob(D_{n}) \ge \delta_{4}(x,\varepsilon)\prob(D_{n} | O^{m,s,t}).
\end{equation}
From Lemma \ref{lem:probGamma} (i) it follows that
\begin{equation} \label{eq:r-end2}
\prob(D_{n} | O^{m,s,t}) \ge (C_{14})^{\frac{1}{x^{2}}}\prob(D_{n} | O^{m,s,t}; \Gamma^{m,s,t}).
\end{equation}
The next step is conditioning on the widest open circuits.
\begin{equation} \label{eq:final-added}
 \prob(D_{n} | O^{m,s,t}; \Gamma^{m,s,t}) = \sum_{(\tilde{\gamma}): (m,s,t)\textrm{-nice}}
\prob(D_{n} | (\gamma) = (\tilde{\gamma}); O^{m,s,t}) \, \prob((\gamma) = (\tilde{\gamma}) | O^{m,s,t}; \Gamma^{m,s,t}).
\end{equation}
For each $(\tilde{\gamma})$ we denote by $\mathcal{C}_{(\tilde{\gamma})}^{in}$ the number of all vertices that are in the
interior of a circuit in the collection $(\tilde{\gamma})$ and connected to that circuit,
and by $\mathcal{C}_{(\tilde{\gamma})}^{out}$ the number of vertices outside these circuits
that are connected to one or more of these circuits, plus the number of vertices on these circuits. We have
\begin{eqnarray} \label{eq:r-end3}
\lefteqn{\prob(D_{n} | (\gamma) = (\tilde{\gamma}); O^{m,s,t})} \\ \nonumber
 & \ge & \prob\left(\frac{\mathcal{C}_{(\tilde{\gamma})}^{in}}{n^{2}\pi(n)} \in (a,b - \frac{1}{3}(b-a)) \; | \; \frac{\mathcal{C}_{(\tilde{\gamma})}^{out}}{n^{2}\pi(n)} \le \frac{1}{3}(b-a); (\gamma) = (\tilde{\gamma}); O^{m,s,t}\right) \\ \nonumber
 & & \cdot \prob\left(\mathcal{C}_{(\tilde{\gamma})}^{out} \le \frac{1}{3}(b-a)n^{2}\pi(n) \; | \; (\gamma) = (\tilde{\gamma});
O^{m,s,t}\right) \\ \nonumber
 & \geq & \frac{1}{2} \prob\left(\frac{\mathcal{C}_{(\tilde{\gamma})}^{in}}{n^{2}\pi(n)} \in
(a,\frac{1}{3} a + \frac{2}{3} b)\right),
\end{eqnarray}
where the last inequality holds by \eqref{eq:EpsCond} and Lemma \ref{lem:probGamma} (ii), and because the
configurations in the interiors of the $\tilde{\gamma}_{i,j}$'s 
are obviously independent of  the event conditioned on in the expression in the r.h.s. of the first inequality.
Note that $\mathcal{C}_{(\tilde{\gamma})}^{in} = \sum_{i,j} \mathcal{C}^{\tilde{\gamma}_{i,j}}$.
The $\mathcal{C}^{\tilde{\gamma}_{i,j}}$'s are independent
and for each $i,j$ we have
\begin{eqnarray*}
 \lefteqn{\prob\left(\mathcal{C}^{\tilde{\gamma}_{i,j}} \in (ax^{2} n^{2}\pi(n), \frac{1}{3}(b-a)n^{2}\pi(n))\right)}\\
 & \stackrel{\eqref{eq:xCondI}, \eqref{eq:xCondII}}{\ge} & \prob\left(\mathcal{C}^{\tilde{\gamma}_{i,j}} \in
\left(C_{17}\xn^{2}\pi(\xn), C_{18}\xn^{2}\pi(\xn)\right)\right) \stackrel{\textrm{Lem.} \ref{lem:sizeIntGam} (i)}{\ge}
\delta_{2},
\end{eqnarray*}
and
\begin{eqnarray*}
 \lefteqn{\prob\left(\mathcal{C}^{\tilde{\gamma}_{i,j}} \le x^{2}\frac{1}{3}(b-a)n^{2}\pi(n)\right)} \\
 & \stackrel{\eqref{eq:EpsCond}}{\ge} & \prob\left(\mathcal{C}^{\tilde{\gamma}_{i,j}}
\le 4C_{10}\exn\xn\pi(\exn)\right) \stackrel{\textrm{Lem.} \ref{lem:sizeIntGam} (ii)}{\ge} \delta_{3}(\varepsilon).
\end{eqnarray*}
Hence
the conditions of Lemma \ref{lem:sumOfRandomVars} (with $k = (1/x)^2$, $\al = a n^2 \pi(n)$, $\be = (\frac{1}{3} a + 
\frac{2}{3} b) n^2 \pi(n)$) are satisfied. Hence, by that lemma the l.h.s. of \eqref{eq:r-end3} is at least
$\frac{1}{2} (\delta_{2} \wedge \delta_{3}(\ep))^{(1/x)^2}$. Together with \eqref{eq:r-end1} - \eqref{eq:final-added}
this shows that 
\begin{equation}\label{eq:bound-Dn}
\P(D_n) > \de_5,
\end{equation}
with $\de_5$ a positive constant which depends only on $a$ and $b$.

Now we will show that, by the way we `constructed' the open cluster, a similar result holds for the
{\em maximal} open cluster in $\La_n$.
First note that the `constructed' cluster has the property that it contains an open horizontal and
an open vertical crossing of the box $\La_{m s}$. Also note that there is at most one open cluster
with this property. Given the exact location of the (unique) open cluster with this property, the conditional 
probability that it is the maximal cluster in $\La_n$ is, if its size is larger than $a n^2 \pi(n)$, clearly
larger than or equal to the probability that the remaining part of $\La_n$ contains no open cluster of size
larger than $a n^2 \pi(n)$. By obvious monotonicity this probability is at least $\P(\mathcal{M}_n \leq a n^2 \pi(n))$,
which by Theorem \ref{thm:maxClustCanBeSmall} is at least some positive constant $\de_2$ (which depends only on $a$).
This argument gives
$$\P\left(\mathcal{M}_n \in (a n^2 \pi(n), b n^2 \pi(n))\right) \geq \de_2 \de_5,$$
which completes the proof of Theorem \ref{thm:maxClusterInInt} for $p = 1/2$.

Now let, more generally, $p$ be such that $L(p) \geq n$. It is straightforward to check that (due to the `uniformity'
in $p$ of the results in Section \ref{sect-notation}) each step in the proof
remains essentially valid. For instance, it is easy to see from the arguments used that Lemma \ref{lem:ExpSizeEpsxX} 
(now with $\P$ replaced by $\P_p$) remains valid as long as $L(p) \geq s$. Since we take $s \leq n$ (application of)
this lemma (and, similarly, the other lemma's) can be carried out as before.
This completes the proof of Theorem \ref{thm:maxClusterInInt}. \qed

\medskip\noindent
{\bf \large Acknowledgment.} The first author thanks Antal J\'arai for a useful and pleasant discussion on these an related
problems.

\end{document}